\documentclass[11pt]{article}
\usepackage{amsmath,amsthm,amsfonts,amssymb,amscd, amsxtra, mathrsfs}
\usepackage{url}
\usepackage[margin=2.5cm,nohead]{geometry}
\usepackage{color}
\newcommand{\banacha}{X}
\newcommand{\banachb}{Y}
\newcommand{\banachc}{Z}
\newtheorem{theorem}{Theorem}
\newtheorem{lemma}[theorem]{Lemma}
\newtheorem{definition}{Definition}
\newtheorem{corollary}[theorem]{Corollary}
\newtheorem{proposition}[theorem]{Proposition}

\newtheorem{remark}{Remark}
\begin{document}
\title{Local convergence analysis  of Newton's  method for solving strongly regular generalized equations}

\author{ 
O.  P. Ferreira\thanks{IME/UFG,  CP-131, CEP 74001-970 - Goi\^ania, GO, Brazil (Email: {\tt
      orizon@ufg.br}).  The author was supported in part by
FAPEG, CNPq Grants 305158/2014-7 and PRONEX--Optimization(FAPERJ/CNPq).} 
\and 
G. N. Silva \thanks{CCET/UFOB,  CEP 47808-021 - Barreiras, BA, Brazil (Email: {\tt  gilson.silva@ufob.edu.br}). The author was supported in part by CAPES .}    
}
\date{September  26, 2016}

\maketitle

\begin{abstract}
In this paper we study Newton's method for solving generalized equations in Banach spaces. We show that under strong regularity of the generalized equation, the  method is locally  convergent  to a solution with  superlinear/quadratic  rate.  The presented analysis  is  based on Banach Perturbation Lemma for generalized equation and   the classical Lipschitz condition on the derivative is relaxed  by  using a general  majorant function, which enables  obtaining   the optimal convergence radius,  uniqueness of  solution   as well as unifies  earlier results pertaining to  Newton's method theory. 
\medskip
 
\noindent
{\bf Keywords:}  Generalized equation,  Newton's method, strong regularity, majorant condition.
\medskip
\noindent {\bf Mathematical Subject Classification (2010):} Primary 65K15; 49M15;  90C31
\end{abstract}

\section{Introduction}
\label{intro}
In this paper we consider the Newton's method for solving the generalized equation of the form
\begin{equation} \label{eq:ipi}
  f(x) +F(x) \ni 0,
\end{equation}
where $f:{\Omega}\to \banachb$ is a   continuously   differentiable function, $\banacha$  and $\banachb$ are Banach spaces, $\Omega\subseteq \banacha$ is  an open set and $F:\banacha \rightrightarrows  \banachb$ is a set-valued mapping with  nonempty  closed graph. As is well known, the model of the generalized equation \eqref{eq:ipi} covers several class of problems, due to this  important characteristic  it has been studied  in several works,  having \cite{DontchevAragon2014, DontchevAragon2011, Dontchev1996, DontchevRockafellar2009,DontchevRockafellar2010, DontchevRockafellar2013,   josephy1979, Robinson1972_2} as part of a whole.  For instance,  as we can see, if  $F$ is the normal cone mapping $N_C$, of a convex set $C$ in $\banachb$ and $\banachb=\banacha^*$ is the dual of $\banacha$,  the inclusion \eqref{eq:ipi} is the  variational inequality problem; for more details see  \cite{DontchevRockafellar2009}.

Newton's method is undoubtedly one of the most popular methods for numerically solving nonlinear  equation. This is because of its importance both theoretical and practical, and even more is due to its quadratic rate of convergence. Throughout the years, this method has been extended in many directions by several authors,   one of the most studied currently is the generalization of this  to solve \eqref{eq:ipi}, which has its origin in the works of N. H.  Josephy \cite{josephy1979}. Based on the work of  N. H.  Josephy \cite{josephy1979}, we study the local convergence of the following Newton's method for solving \eqref{eq:ipi}:
\begin{equation} \label{eq:ipi1}
  f(x_k) + f'(x_k)(x_{k+1}-x_k)+ F(x_{k+1}) \ni 0, \qquad k=0,1, \ldots.
\end{equation}
This algorithm has been studied in several papers  including  but not limited to \cite{DontchevAragon2014, DontchevAragon2011, Bonnans, DokovDontchev1998, Dontchev1996, Dontchev1996_2,  DontchevRockafellar2013};  see also \cite[ Section~6C]{DontchevRockafellar2009} and \cite{LiNg}. If $F\equiv 0$, the iteration \eqref{eq:ipi1} becomes the usual Newton method for solving the equation $f(x)=0.$ If  $F= N_C$,  the normal cone mapping   of a convex set $C$ in $\banachb$ and $\banachb=\banacha^*$,  then   \eqref{eq:ipi1} is the version of the Newton's method for solving  variational inequality;  see \cite{Bonnans, DokovDontchev1998, josephy1979}. In particular, if \eqref{eq:ipi} represents the Karush-Kuhn-Tucker  optimality conditions  for a mathematical programming problem, then  \eqref{eq:ipi1} describes the well-known   sequential quadratic programming method; see for example a detailed discussion in   \cite[pag. 334]{DontchevRockafellar2009}; see also \cite{Dontchev1996_3}.

Under the assumption that $f$ is Fr\'echet differentiable in some neighborhood of a solution $\bar x$ of \eqref{eq:ipi},  S.~M.~Robinson in \cite{Robinson1980} obtained a condition on the linearization of \eqref{eq:ipi} about $\bar{x}$, i.e., on the generalized equation
$$
  f(\bar{x}) + f'(\bar{x})(x-\bar{x})+ F(x) \ni 0 ,
$$
which he called {\it strong regularity}, in order to guarantee unique solution of the generalized equation $$f(\hat{x}) + f'(\hat{x})(x-\hat{x})+ F(x) \ni 0 ,$$ for all $\hat{x}$ in  a neighborhood  $\bar x$. The classic local analysis of Newton's method  for solving  $f(x)=0$ require invertibility  of the derivative $f'$ at the solution,  which is actually critical for  the  well definition of the method.  Therefore, for the  local analysis of Newton's method  for solving  \eqref{eq:ipi} we will need of a similar  concept, namely,  the {\it strong regularity} of $f+F$ at the solution $\bar{x}\in  \banacha$  for $0\in \banachb$. If $\banacha = \banachb$ and $F=\{ 0\},$ then   strong regularity of $f+F$ at the solution $\bar{x}\in \banacha$  for $0\in  \banacha$ is equivalent to assumption that   $f'(\bar{x})^{-1}$  is a continuous linear operator. An important case is when \eqref{eq:ipi} represents the  Karush-Kuhn-Tucker's  system for  the standard nonlinear programming problem with a strict local minimizer, see  \cite{DontchevRockafellar2009} pag. 232. In this case, strong regularity of this system is equivalent to  the  linear independence of the gradients of the active constraints and a strong form of the second-order sufficient optimality condition; for details see  \cite[Theorem 6]{DontchevRockafellar96}.  The analysis presented in this paper will be made under  strong regularity on the solution of \eqref{eq:ipi}. 

It is well-known  that, to obtain quadratic convergence rate of Newton's method \eqref{eq:ipi1}, the Lipschitz continuity of  $f'$  in a neighborhood  of a solution of \eqref{eq:ipi} is required, see for example  \cite{  DontchevAragon2011, Dontchev1996, DontchevRockafellar2009, DontchevRockafellar2010}.  Indeed,   keeping control of the derivative is an important point in the convergence analysis of Newton's methods and its variations, as we can see in \cite{ Dontchev2015, DontchevRockafellar2013,  DontchevAragon2014,  FerreiraMax2013, Wang2002206}. Recently, there has been an increased interest in the study of Newton's method and its variations for solving the equation $f(x)=0$,   by relaxing the hypothesis of Lipschitz continuity of $f'$. For instance, the majorant condition is one of those conditions that relax the Lipschitz condition which has several equivalent formulations,  see for example \cite{Alvarez2008, Argyros, FerreiraSvaiter2009, FerreiraMax2013,  LiWang2006, LiWangDedieu2009,  Wang1999, Wang2002206, Zabrejko1987}.  The advantage of working with a majorant condition is that it makes us clearly see how big the radius of convergence is, besides    allow us  unify several convergence results pertaining to  Newton's method; see  \cite{FerreiraSvaiter2009, Wang1999}. In this paper, under the majorant condition,  we establish a local convergence analysis of Newton's method \eqref{eq:ipi1} by assuming  strong regularity of $f+F$ at the solution $\bar{x}\in \banacha$  for $0\in  \banacha$. Before proving our main result, which establish the optimal convergence radius for  the method with respect to the majorant condition and uniqueness of solution, a clear relationship between the majorant function and the function defining the generalized equation is obtained. As special cases, we present an analysis of this result under Lipschitz's  and Smale's conditions. Up to our knowledge, this is the first time that the Newton method for solving  generalized equations under a general majorant condition and, in particular, under Smale's condition is analyzed,  similar studies  has been done in \cite{Alvarez2008, DedieuPrioureMalajovich2003, Gilson2016,  Gilson2016_1,  LiWangDedieu2009,  Li2008a, LiWang2006, Wang2002206}.  In addition, it is worth mentioning that  our  approach is based in the  Banach Perturbation Lemma obtained by  S.~M.~Robinson in  \cite[Theorem~2.4]{Robinson1980}. In this sense,  our approach is related to the techniques  used in  \cite{Dontchev2015, DokovDontchev1998, josephy1979}.

The organization of the paper is as follows. In the following section we present background material and some technical results  used  throughout  the paper. Section \ref{lkant} is devoted to our main result and  in   Section~\ref{sec:PMF} properties of the majorant function,  the main  relationships   between the majorant function and the generalized equation, the uniqueness of the solution and the optimal convergence radius are established. In  Section~\ref{sec:proof} the main result is then proved and some applications of this result are given in Section~\ref{apl}. Finally, conclusions are stated in  Section~\ref{rf}.

\section{Preliminaries} \label{sec:int.1}
We use the following notation. Let $\banacha$,\, $\banachb$ be Banach spaces.  The {\it space consisting of all continuous linear mappings} $A:\banacha \to \banachb$ will be denote by ${\mathscr L}(\banacha,\banachb)$  and   the {\it operator norm}  of $A$ will be defined  by $  \|A\|:=\sup \; \{ \|A x\|~:  \|x\| \leqslant 1 \}.$ Let  $\Omega\subseteq \banacha$ and    $h:{\Omega}\to \banachb$  a function with  Fr\'echet derivative at  all $x\in int(\Omega)$.  The Fr\'echet derivative of $h$ at $x$ is the linear map $h'(x):\banacha \to \banachb$ which is continuous. We identify as the {\it graph} of the set-valued mapping $H:\banacha \rightrightarrows  \banachb$ the set $\mbox{gph}~H:= \left\{(x,y)\in \banacha \times \banachb ~: ~ y \in H(x)\right\}$.  The {\it inverse} $H^{-1}$ of a map $H:\banacha \rightrightarrows  \banachb$  is defined as  $ H^{-1}(y):=\{x \in \banacha ~: ~ y \in H(x)\}$. Define  $ B(x,\delta) := \{ y\in X ~: ~\|x-y\|<\delta \}$ and  $B[x,\delta] := \{ y\in X ~: ~\|x-y\|\leqslant \delta\}$ as the {\it open} and {\it closed balls} centered at $x$ with radius $\delta\geq 0$.
\begin{definition}\label{def:plm}
Let  $\Omega$ be a nonempty, open, convex subset of $\banacha$. Let  $h: \Omega \to \banachb$ be   a function  with continuous derivative  $h'$ and $H:\banacha \rightrightarrows  \banachb$ be a set-valued mapping. The {\it partial linearization mapping} of    $h +H$  at   $x$  is   the set-valued mapping   $L_h(x, \cdot ):\banacha \rightrightarrows  \banachb$  given by
$$
L_h(x, z ):=h(x)+h'(x)(z-x)+H(z).
$$
Thus, the linearization of  a generalized equation $h(z)+H(z)\ni 0$ at $x$ is obtained by replacing $h+H$ with $L_h(x, \cdot)$. The inverse $L_h(x, \cdot )^{-1}$ of the map  $L_h(x, \cdot )$  at $y\in Y$ is denoted  by
\begin{equation} \label{eq:invplm}
L_h(x, y )^{-1}:=\left\{z\in X ~:~ y\in h(x)+h'(x)(z-x)+H(z)\right\}.
\end{equation}
\end{definition}
An important element in the analysis of Newton's method,  for solving the equation $f(x)=0,$ is the behavior of the inverse $f'(x)^{-1}$,  for $x$ in a neighborhood of a solution $\bar{x}.$ The analogous element for  the generalized equation \eqref{eq:ipi} is the behavior  of the inverse mapping  $L_f(x, \cdot )^{-1}$,  for $x$ in a neighborhood of a solution $\bar{x}.$ It is worth to point out that,  N.~H.~Josephy  in \cite{josephy1979} was the first to consider  Newton's method for solving the generalized equation $f(x)+N_{C}(x) \ni 0$, where $N_{C}$ is the normal cone of a convex set $C\subset \mathbb{R}^n$, by defining the Newton iteration as   
$$
f(x_k)+f'(x_k)(x_{k+1} -x_k)+N_{C}(x_{k+1} ),  \qquad k=0, 1, \ldots. 
$$
For analyzing  this method,   was  employed the  important concept  of strong regularity defined by S.~M.~Robinson \cite{Robinson1980}, which  assure  {\it ``good  behavior"} of $L_f(x, \cdot )^{-1}$,  for $x$ in a neighborhood of a solution $\bar{x}.$ Here we adopt the following definition   due to S.~M.~Robinson;   see \cite{Robinson1980}.
\begin{definition}\label{eq:stronmetr}
Let $\banacha$,\, $\banachb$ be Banach spaces,    $\Omega$ be an open and nonempty subset of $\banacha$,  $h: \Omega \to \banachb$ be   Fr\'echet differentiable  with derivative  $h'$ and $H:\banacha \rightrightarrows  \banachb$ be a set-valued mapping.  The mapping $h+H$ is said to be strongly regular at $x$ for $y$ with modulus $\lambda>0$,  when $y \in h(x) +H(x)$ and there exist neighborhoods $U_{x}$ of $x$ and $V_{y}$ of $y$ in $\banachb$  such that  $U_{x}\subset \Omega$,  the mapping $ z\mapsto L_h(x, z )^{-1}\cap U_{x}$ is a  single-valued   function from $ V_{y}$ to  $ U_{x}$,  which is Lipschitizian on $V_y$  with modulus  $\lambda$, i.e.,
$$
\left\|L_h(x,u)^{-1}\cap U_{x}- L_h(x,v)^{-1}\cap U_{x} \right\| \leq \lambda \|u-v\|, \qquad \forall ~ u, v \in  V_{y}.
$$
\end{definition}
Since the mapping $ z\mapsto L_h(x,z)^{-1}\cap U_{x}$ is   single-valued  from $ V_{y}$ to  $ U_{x}$, for simplify the notation we are using in above definition  $w= L_h(x,z)^{-1}\cap U_{x}$ instead of $\{w\}:= L_f(x,z)^{-1}\cap U_{x}.$ {\it From now on we will use  this simplified notation. }
\begin{remark} \label{re:rxinf}
If in above definition $H(x)\equiv \{0\}$ then  the property of $h+H\equiv h$ be strongly  regular  at the solution $\bar{x}$  for $0$,  reduces to $h'(\bar{x})$ has an  inverse $h'(\bar{x})^{-1}$. Moreover,  in this case,  $\lambda=\|h'(\bar{x})^{-1}\|$, $U_{\bar{x}}=X$ and $V_{\bar{x}}= \banachb$.  An important particular instance is when \eqref{eq:ipi} represents the  Karush-Kuhn-Tucker's  system for  the standard nonlinear programming problem with a strict local minimizer.  In this case,  the strong  regularity of this system is equivalent to  the  linear independence of the gradients of the active constraints and the strong second-order sufficient optimality condition;   see  \cite[Example 6C.8]{DontchevRockafellar2009}, see also  \cite[Theorem~6]{DontchevRockafellar96}.
\end{remark}
For a detailed discussion about  Definition~\ref{eq:stronmetr}  see \cite{DontchevRockafellar2009, Robinson1980}.  The next result is a type of implicit function theorem for generalized equations satisfying the condition of strong regularity, its proof is similar to  \cite[Theorem 2.1] {Robinson1980}, it  also can be seem as a particular instance of   \cite[Theorem 5F.4]{DontchevRockafellar2009} on page 294.
\begin{theorem}\label{eq:Implitheor}
 Let $\banacha$,  $\banachb$ and  $\banachc$ be Banach spaces, $G:\banacha \rightrightarrows  \banachb$ be  a set-valued mapping and $g:\banachc \times\banacha \to \banachb$  be a  continuous   function,   having  partial Fr\'echet derivative  with respect to the second variable  $D_{x}g$  on $\banachc\times\banacha$, which is also continuous. Let $\bar{p} \in \banachc$ and suppose  that  $\bar x$ solves the generalized equation
$$
g(\bar p,x)+G(x) \ni 0.
$$
Assume that the  mapping  $g(\bar{p}, .)+G$ is strongly regular at $\bar{x}$ for $0$, with modulus $\lambda$. Then, for any    $\epsilon >0$  there exist neighborhoods $U_\epsilon$ of $\bar x$ and $V_\epsilon$ of $\bar p$  and a single-valued function $s:V_\epsilon \to U_\epsilon$ such that  for any $p\in V_\epsilon$, $s(p)$ is the unique solution in $U_\epsilon$ of the inclusion
$
g(p,x)+G(x) \ni 0,
$
and $s(\bar p)=\bar x$. Moreover, there holds
$$
\|s(p')-s(p)\| \leq (\lambda + \epsilon)  \|g(p',s(p))- g(p,s(p))\|,  \qquad \forall~ p,p' \in V_\epsilon.
$$
\end{theorem}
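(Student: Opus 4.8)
The plan is to parametrize the perturbed generalized equation so that Robinson's strong regularity result applies directly, and then to transfer the conclusions back to the original inclusion. Concretely, I would work with the map $g:\banachc\times\banacha\to\banachb$ together with $G$ around the base point $(\bar p,\bar x)$, where by hypothesis $g(\bar p,\bar x)+G(\bar x)\ni 0$ and $g(\bar p,\cdot)+G$ is strongly regular at $\bar x$ for $0$ with modulus $\lambda$; fix neighborhoods $U_{\bar x}$ of $\bar x$ and $V_0$ of $0$ as in Definition~\ref{eq:stronmetr}, so that the localized inverse $L:=L_{g(\bar p,\cdot)}(\bar x,\cdot)^{-1}\cap U_{\bar x}$ is a single-valued $\lambda$-Lipschitz function from $V_0$ to $U_{\bar x}$. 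The key reduction is to observe that solving $g(p,x)+G(x)\ni 0$ for $x$ near $\bar x$ is the same as solving
\[
g(\bar p,x)+G(x)\ni y(p,x),\qquad y(p,x):=g(\bar p,x)-g(p,x),
\]
i.e. $x$ is a fixed point of the map $\Phi_p(x):=L\bigl(y(p,x)\bigr)$, which is well defined once $y(p,x)\in V_0$ and $x\in U_{\bar x}$. Since $g$ is continuous and $L$ is Lipschitz with $L(0)=\bar x$, a standard contraction-mapping argument on a small closed ball $U_\epsilon:=B[\bar x,r]\subseteq U_{\bar x}$ gives, for $p$ in a suitable neighborhood $V_\epsilon$ of $\bar p$, a unique fixed point $s(p)\in U_\epsilon$; setting $s(\bar p)=\bar x$ is forced because $y(\bar p,\cdot)\equiv 0$ and $L(0)=\bar x$. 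This is essentially the content of \cite[Theorem~2.1]{Robinson1980} (or \cite[Theorem~5F.4]{DontchevRockafellar2009}), so I would either cite it or reproduce the short fixed-point argument.

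Once $s$ is in hand, the Lipschitz estimate is obtained by playing $\lambda$ against a small slack $\epsilon$. The honest technical point is that the Lipschitz modulus of the \emph{localized} inverse is exactly $\lambda$ only on the fixed neighborhood $V_0$; along the iteration the relevant increments of the right-hand side $y(p,x)$ are genuinely small (because $g(\bar p,s(p))-g(p,s(p))\to 0$ as $p\to\bar p$, uniformly for $s(p)\in U_\epsilon$, by continuity of $g$), so after shrinking $U_\epsilon$ and $V_\epsilon$ one may absorb the higher-order terms into $\epsilon$. Explicitly, for $p,p'\in V_\epsilon$ write $s(p)=L\bigl(y(p,s(p))\bigr)$ and $s(p')=L\bigl(y(p',s(p'))\bigr)$; then
\[
\|s(p')-s(p)\|\le \lambda\,\bigl\|y(p',s(p'))-y(p,s(p))\bigr\|
\le \lambda\,\bigl\|g(p',s(p))-g(p,s(p))\bigr\|+\lambda\,\bigl\|y(p',s(p'))-y(p',s(p))\bigr\|.
\]
The last term is bounded by $\lambda\,\omega\,\|s(p')-s(p)\|$, where $\omega$ is a modulus of continuity of $x\mapsto g(\bar p,x)-g(p',x)$ on $U_\epsilon$ that tends to $0$ as the neighborhoods shrink; choosing $V_\epsilon$ small enough that $\lambda\omega\le \epsilon/(\lambda+\epsilon)<1$ and rearranging yields $\|s(p')-s(p)\|\le(\lambda+\epsilon)\|g(p',s(p))-g(p,s(p))\|$, as claimed.

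The main obstacle I anticipate is bookkeeping with the three shrinking neighborhoods: $U_\epsilon$ must stay inside $U_{\bar x}$ so that the localized single-valued branch $L$ is available, $V_\epsilon$ must be small enough that $y(p,x)\in V_0$ for all $x\in U_\epsilon$ (so $\Phi_p$ maps $U_\epsilon$ into $U_{\bar x}$ and the contraction estimate closes), and simultaneously small enough that the slack term $\lambda\omega$ is controlled by $\epsilon$. Making these choices in the right order — first $\epsilon$, then $U_\epsilon$, then $V_\epsilon$ — and using only continuity of $g$ and of $D_xg$ (the derivative hypothesis is what guarantees the continuity of $g(\bar p,\cdot)$ and is needed if one wants the map $g(\bar p,\cdot)+G$ itself, rather than just its linearization, to fit Definition~\ref{eq:stronmetr}) is the whole game; there is no deeper difficulty, and indeed the statement is flagged in the text as a mild variant of Robinson's theorem, so I would keep the proof short and defer to \cite{Robinson1980,DontchevRockafellar2009} for the routine parts.
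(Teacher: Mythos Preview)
Your proposal is correct and matches the paper's approach: the paper does not give a detailed proof at all, stating only that ``its proof is similar to \cite[Theorem~2.1]{Robinson1980}'' and that the result ``can be seen as a particular instance of \cite[Theorem~5F.4]{DontchevRockafellar2009}.'' Your fixed-point reduction via the localized inverse $L$ and the shrinking-neighborhood bookkeeping to absorb the slack into $\epsilon$ is precisely the Robinson argument to which the paper defers, so your sketch is exactly what is intended.
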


 Indeed, the  first version of the Theorem~\ref{eq:Implitheor} was proved by S.~M.~Robinson; see  \cite[Theorem~{2.1}]{Robinson1980},   to the particular  case  $F=N_{C}$, where $N_{C}$ is the normal cone of a convex set $C\subset \banacha$. As an application,  a version of the Banach Perturbation Lemma    involving the normal cone was  obtained;  see \cite[Theorem~2.4]{Robinson1980}. N.~H.~Josephy in \cite{josephy1979},  used  this version of  Banach Perturbation Lemma,  see \cite[Corollary~1]{josephy1979},  for  proving  that the Newton iteration
$$
  f(x_k) + f'(x_k)(x_{k+1}-x_k)+ N_{C}(x_{k+1}) \ni 0, \qquad  k=0,1,\ldots,
$$
where $N_{C}$ is the normal cone of a convex set $C\subset \mathbb{R}^n$, is quadratically convergent to a solution of   $f(x)+N_{C}(x) \ni 0$.  In the next lemma we state a   version  of the Banach Perturbation Lemma  involving a general set-valued mapping, its proof is similar to the correspondent   one \cite[Theorem~2.4]{Robinson1980}.
\begin{lemma} \label{lem:blr}
Let $\banacha, \banachb$ be Banach spaces, $a_0$ be a point of $\banachb,$ $G:\banacha \rightrightarrows  \banachb$ be a set-valued mapping and $A_0:\banacha \to \banachb$ be a bounded linear operator. Suppose that $\bar{x}$ is a point of $\banacha$ which satisfies the generalized equation
$$
0 \in A_0 x + a_0 + G(x).
$$
Assume that the mapping $ A_0  + a_0 + G$ is   strongly  regular at $\bar{x}$ for $0$ with modulus $\lambda$. Then there exist neighborhoods $M$ of $A_0$ in ${\mathscr L}(\banacha,\banachb),$ $N$ of $a_0$ and $W$ of origin in $\banachb,$ and $U$ of $\bar{x},$ such that,  for any $A\in M$ and $a\in N$,   letting   $T(A, a, \cdot): U \rightrightarrows  \banachb$ be   defined as
$$
T(A, a, x):= Ax + a + G(x),
$$
then $ y\mapsto T(A, a, y)^{-1}\cap U$ is a  single-valued  function from $W$ to $U$.  Moreover, letting a neighborhood $\bar{M}$ of $A_0$ such that   $\bar{M}\subset M$ and $\lambda \|A-A_0\|<1$   for each $A\in \bar{M}$, then for each $A\in \bar{M}$ and $a\in N$ the function $ y\mapsto T(A, a, y)^{-1}\cap U$ is    Lipschitz   on  W as follows
$$
\left\|T(A, a, y_1)^{-1}\cap U - T(A, a, y_2)^{-1}\cap U \right\| \leq \frac{\lambda}{1-\lambda\|A-A_0\|} \|y_1-y_2\|,  
$$
for each $y_1, y_2 \in  W.$
\end{lemma}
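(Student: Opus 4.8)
The plan is to follow Robinson's derivation of the Banach perturbation lemma: reduce the perturbed inclusion $y\in Ax+a+G(x)$ to a fixed-point equation for a contraction whose ratio is controlled by $\lambda\|A-A_0\|$, and then read off the stated Lipschitz constant by a rearrangement. First I would unpack strong regularity. Since $z\mapsto A_0z+a_0$ is affine, its partial linearization at $\bar x$ is the mapping $x\mapsto A_0x+a_0+G(x)$ itself; hence strong regularity of $A_0(\cdot)+a_0+G$ at $\bar x$ for $0$ with modulus $\lambda$ yields open neighborhoods $U_0$ of $\bar x$ and $V_0$ of $0$ in $\banachb$ such that
$$
\sigma(v):=\bigl\{x\in U_0\ :\ v\in A_0x+a_0+G(x)\bigr\}\qquad(v\in V_0)
$$
is single-valued (each such set being a singleton is exactly the content of strong regularity), with $\sigma(0)=\bar x$, and Lipschitz on $V_0$ with modulus $\lambda$.

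Next I would fix a closed ball $U:=B[\bar x,r]\subseteq U_0$ and then pick balls $M$ around $A_0$ in ${\mathscr L}(\banacha,\banachb)$, $N$ around $a_0$, and $W$ around $0$ in $\banachb$ with radii small enough that, for all $A\in M$, $a\in N$, $y\in W$, $x\in U$, the point $w:=y-a+a_0-(A-A_0)x$ lies in $V_0$ and the map $\Psi_{A,a,y}(x):=\sigma\bigl(y-a+a_0-(A-A_0)x\bigr)$ sends $U$ into $U$; the self-map property follows from $\|\Psi_{A,a,y}(x)-\bar x\|=\|\sigma(w)-\sigma(0)\|\le\lambda\|w\|\le\lambda\bigl(\|y\|+\|a-a_0\|+\|A-A_0\|(\|\bar x\|+r)\bigr)\le r$ once the radii of $W,N,M$ are sufficiently small. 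The key elementary step is the equivalence, valid for $x\in U$ and $(A,a,y)\in M\times N\times W$,
$$
y\in Ax+a+G(x)\iff x=\Psi_{A,a,y}(x),
$$
proved by rewriting $y\in Ax+a+G(x)$ as $y-a+a_0-(A-A_0)x\in A_0x+a_0+G(x)$ and using $x\in U\subseteq U_0$ together with the single-valuedness of $\sigma$ on $V_0$. Thus $T(A,a,y)^{-1}\cap U$ is precisely the fixed-point set of $\Psi_{A,a,y}$ in $U$.

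Finally, restricting to a neighborhood $\bar M\subseteq M$ on which $\lambda\|A-A_0\|<1$, the Lipschitz bound on $\sigma$ gives $\|\Psi_{A,a,y}(x_1)-\Psi_{A,a,y}(x_2)\|\le\lambda\|A-A_0\|\,\|x_1-x_2\|$, so $\Psi_{A,a,y}$ is a contraction of the complete metric space $U$; the contraction mapping principle then produces a unique fixed point, whence $y\mapsto T(A,a,y)^{-1}\cap U$ is single-valued from $W$ to $U$. (Single-valuedness on the possibly larger $M$ of the statement can instead be obtained by applying Theorem~\ref{eq:Implitheor} to the parametrized inclusion $g((A,a,y),x)+G(x)\ni 0$ with $g((A,a,y),x):=Ax+a-y$ and base point $(A_0,a_0,0)$, whose partial derivative in $x$ is the continuous map $A$ and whose base mapping is the strongly regular $A_0(\cdot)+a_0+G$.) Then, writing $x_i:=T(A,a,y_i)^{-1}\cap U$ and $w_i:=y_i-a+a_0-(A-A_0)x_i\in V_0$, the equivalence above gives $x_i=\sigma(w_i)$, so that
$$
\|x_1-x_2\|=\|\sigma(w_1)-\sigma(w_2)\|\le\lambda\|w_1-w_2\|\le\lambda\|y_1-y_2\|+\lambda\|A-A_0\|\,\|x_1-x_2\|,
$$
and rearranging (using $\lambda\|A-A_0\|<1$) yields $\|x_1-x_2\|\le\frac{\lambda}{1-\lambda\|A-A_0\|}\|y_1-y_2\|$, as claimed. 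I expect the only genuine difficulty to be the neighborhood bookkeeping in the middle step — simultaneously forcing the shifted right-hand sides into the domain $V_0$ of $\sigma$ and keeping $\Psi_{A,a,y}$ a self-map of the closed ball $U$ — rather than anything conceptual; the contraction estimate and the final rearrangement producing exactly the constant $\lambda/(1-\lambda\|A-A_0\|)$ are then routine.
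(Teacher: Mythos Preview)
Your proposal is correct and follows exactly the route the paper indicates: the paper does not supply its own proof of this lemma but simply refers to Robinson's Theorem~2.4 in \cite{Robinson1980}, and your argument --- rewriting $y\in Ax+a+G(x)$ as the fixed-point equation $x=\sigma\bigl(y-a+a_0-(A-A_0)x\bigr)$ for the strong-regularity localization $\sigma$, applying the contraction mapping principle on a closed ball with ratio $\lambda\|A-A_0\|$, and then rearranging to obtain the Lipschitz constant $\lambda/(1-\lambda\|A-A_0\|)$ --- is precisely Robinson's proof. Your parenthetical invocation of Theorem~\ref{eq:Implitheor} for single-valuedness on a possibly larger $M$ is also in keeping with Robinson's presentation, though since the lemma only asserts the \emph{existence} of such an $M$ you may as well take $M=\bar M$ from the outset and dispense with it.
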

Next we  establish a corollary to Lemma~\ref{lem:blr}, which  will have important role in the sequel.  A similar result has been obtained by S.~P.~Dokov and  A.~L.~Dontchev, see lemma on pag. 119 of \cite{DokovDontchev1998},  for studying  the local quadratic convergence of Newton's method for variational inequality.
\begin{corollary} \label{cor:ban}
Let $\banacha$,\, $\banachb$ be Banach spaces,    $\Omega$ be an open nonempty subset of $\banacha$,  $f: \Omega \to \banachb$ be continuous with   Fr\'echet differentiable   $f'$ continuous, and $F:\banacha \rightrightarrows  \banachb$ be a set-valued mapping. Suppose that $\bar{x}\in \Omega $   and $f+F$ is  strongly regular at $\bar{x}$ for $0$ with modulus $\lambda >0$.   Then, there exist constants  $r_{\bar{x}}>0$ and $r_{0}>0$ such that, for any $x\in B(\bar{x}, r_{\bar{x}})$, there hold $\lambda \|f'(x)-f'(\bar{x})\|<1$,      the mapping $ z\mapsto L_f(x,z)^{-1}\cap B(\bar{x}, r_{\bar{x}})$ is   single-valued  from $B(0, r_{0})$ to  $B(\bar{x}, r_{\bar{x}})$  and  Lipschitizian on $B(0, r_{0})$  as follows
$$
\left\|L_f(x,u)^{-1}\cap B(\bar{x}, r_{\bar{x}})- L_f(x,v)^{-1}\cap B(\bar{x}, r_{\bar{x}}) \right\| \leq \frac{\lambda \|u-v\|}{1-\lambda\|f'(x)-f'(\bar{x})\|},
$$
$\forall ~ u, v \in  B(0, r_{0}).$
\end{corollary}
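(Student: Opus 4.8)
\noindent\emph{Proof proposal.}
The plan is to read the partial linearization $L_f(\bar x,\cdot)$ as a perturbation of an affine-plus-set-valued mapping and to invoke the Banach Perturbation Lemma, Lemma~\ref{lem:blr}. First I would set $A_0:=f'(\bar x)\in\mathscr L(X,Y)$, $a_0:=f(\bar x)-f'(\bar x)\bar x\in Y$ and $G:=F$. Strong regularity of $f+F$ at $\bar x$ for $0$ forces $0\in f(\bar x)+F(\bar x)$, so $0\in A_0\bar x+a_0+G(\bar x)$; moreover $A_0 z+a_0+G(z)=L_f(\bar x,z)$ for every $z\in X$, whence the strong regularity of $f+F$ at $\bar x$ for $0$ with modulus $\lambda$ is precisely the hypothesis of Lemma~\ref{lem:blr} for the triple $(A_0,a_0,G)$. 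That lemma then supplies neighborhoods $M$ of $A_0$, $N$ of $a_0$, $W$ of $0\in Y$ and $U$ of $\bar x$, together with a sub-neighborhood $\bar M\subseteq M$ of $A_0$ on which $\lambda\|A-A_0\|<1$, such that for every $A\in\bar M$ and $a\in N$ the selection $y\mapsto T(A,a,y)^{-1}\cap U$, where $T(A,a,z)=Az+a+G(z)$, is single-valued from $W$ to $U$ and Lipschitz on $W$ with constant $\lambda/(1-\lambda\|A-A_0\|)$.

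The second step is to substitute the data of $f$ at a point $x$ close to $\bar x$. For such $x$, taking $A:=f'(x)$ and $a:=f(x)-f'(x)x$ one has $T(A,a,z)=f(x)+f'(x)(z-x)+F(z)=L_f(x,z)$. Using continuity of $f'$ and of $x\mapsto f(x)-f'(x)x$, I would choose $\rho_1>0$ so small that $B(\bar x,\rho_1)\subseteq\Omega$ and, for all $x\in B(\bar x,\rho_1)$, both $f'(x)\in\bar M$ and $f(x)-f'(x)x\in N$. This already gives $\lambda\|f'(x)-f'(\bar x)\|<1$ together with single-valuedness on $W$ and the Lipschitz estimate with the claimed constant $\lambda/(1-\lambda\|f'(x)-f'(\bar x)\|)$ for the selection $y\mapsto L_f(x,y)^{-1}\cap U$. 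A further shrinking of $\rho_1$ so that $\lambda\|f'(x)-f'(\bar x)\|\le 1/2$ on $B(\bar x,\rho_1)$ bounds this Lipschitz constant by $2\lambda$ uniformly in $x$, which I use below.

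The remaining point---and the only delicate one---is to pass from the abstract neighborhood $U$ to a genuine ball $B(\bar x,r_{\bar x})$ and to produce a uniform radius $r_0$ for which the selection takes values in that ball, not merely in $U$. For this I would control the base value $\sigma(x):=L_f(x,0)^{-1}\cap U$, which is well defined for $x$ near $\bar x$ by the previous step and satisfies $\sigma(\bar x)=\bar x$ since $\bar x$ solves \eqref{eq:ipi}. Applying the implicit function Theorem~\ref{eq:Implitheor} to $g(p,z):=f(p)+f'(p)(z-p)$ at $\bar p=\bar x$---whose partial derivative $D_zg(p,z)=f'(p)$ is continuous and for which $g(\bar x,\cdot)+F=L_f(\bar x,\cdot)$ is strongly regular at $\bar x$ for $0$---and shrinking the neighborhoods there so that its solution map coincides with $\sigma$, I obtain that $x\mapsto\sigma(x)$ is continuous near $\bar x$. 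Hence there is $\rho_2\in(0,\rho_1]$ with $B[\bar x,\rho_2]\subseteq U$ and $\|\sigma(x)-\bar x\|<\rho_2/2$ for all $x\in B(\bar x,\rho_2)$. Finally I set $r_{\bar x}:=\rho_2$ and pick $r_0>0$ with $B(0,r_0)\subseteq W$ and $2\lambda r_0\le r_{\bar x}/2$; then, for $x\in B(\bar x,r_{\bar x})$ and $\|y\|<r_0$, writing $w:=L_f(x,y)^{-1}\cap U$, the triangle inequality and the uniform Lipschitz bound give
$$
\|w-\bar x\|\le\|w-\sigma(x)\|+\|\sigma(x)-\bar x\|\le 2\lambda\|y\|+\tfrac{r_{\bar x}}{2}<r_{\bar x}.
$$
Since $B(\bar x,r_{\bar x})\subseteq U$ and $w\in B(\bar x,r_{\bar x})$, this forces $L_f(x,y)^{-1}\cap B(\bar x,r_{\bar x})=\{w\}$; consequently $z\mapsto L_f(x,z)^{-1}\cap B(\bar x,r_{\bar x})$ is single-valued from $B(0,r_0)$ to $B(\bar x,r_{\bar x})$ and inherits the Lipschitz inequality of the statement. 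I expect essentially all the work to be concentrated in this last bookkeeping with nested neighborhoods; the rest is a direct translation of Lemma~\ref{lem:blr} together with continuity of $f$ and $f'$.
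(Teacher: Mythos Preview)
Your argument is correct and follows the paper's route almost exactly: both set $A_0=f'(\bar x)$, $a_0=f(\bar x)-f'(\bar x)\bar x$, $G=F$, invoke Lemma~\ref{lem:blr}, and then substitute $A=f'(x)$, $a=f(x)-f'(x)x$ via continuity of $f$ and $f'$ to identify $T(f'(x),f(x)-f'(x)x,\cdot)=L_f(x,\cdot)$. The one point of divergence is your third step. The paper simply takes the neighborhood $U$ delivered by Lemma~\ref{lem:blr} to be a ball $B(\bar x,r_{\bar x})$ from the outset, and then ``shrinks $r_{\bar x}$'' so that $f'(x)\in M$ and $f(x)-f'(x)x\in N$ hold on the smaller ball, without pausing to explain why the selection still lands in the shrunken ball. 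You instead keep $U$ abstract, bring in Theorem~\ref{eq:Implitheor} to get continuity of $\sigma(x)=L_f(x,0)^{-1}\cap U$ in $x$, and then use the uniform Lipschitz bound together with a triangle-inequality argument to force the image into $B(\bar x,r_{\bar x})$. Your version is more scrupulous on exactly this bookkeeping; the paper's proof is shorter because it treats the passage to balls and the shrinking as routine. Either way, the substantive content is the same application of Lemma~\ref{lem:blr}.
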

\begin{proof}
Since  $f+F$ is  strongly regular at $\bar{x}$ for $0$ with modulus $\lambda >0$,  thus  $\bar{x}$ is also solution of the generalized equation
 $$
0 \in L_{f}(\bar{x}, x)=f'(\bar{x}) x + f(\bar{x})-f'(\bar{x})\bar{x} +F(x),
$$
and the mapping $x\mapsto f'(\bar{x}) x + f(\bar{x})-f'(\bar{x})\bar{x} +F(x)$ from $\banacha$ to  $\banachb$ is strongly regular at $\bar{x}$ for $0$ with the same modulus $\lambda >0$. Thus, applying first part of Lemma~\ref{lem:blr} with    $A_0=f'(\bar{x}),$ $a_0= f(\bar{x})-f'(\bar{x})\bar{x}$ and  $G=F, $  we conclude  that   there exist neighborhoods $M$ of $f'(\bar{x})$ in ${\mathscr L}(\banacha,\banachb),$ $N$ of $f(\bar{x})-f'(\bar{x})\bar{x} $ in  $\banacha$ and $W=B(0, r_{0})$ of origin in $\banachb,$ and $U=B(\bar{x}, r_{\bar{x}})\subset \Omega$ of $\bar{x},$  where $r_{\bar{x}}>0$ and $r_{0}>0$, such that,  for any $A\in M$  and $a\in N$,   letting   $T(A, a, \cdot): B(\bar{x}, r_{\bar{x}}) \rightrightarrows  \banachb$ be   defined as
\begin{equation}\label{eq;tgco}
T(A, a, y):= Ay + a + F(y),
\end{equation}
the mapping $ z\mapsto T(A, a, z)^{-1}\cap B(\bar{x}, r_{\bar{x}})$ is a  single-valued function from $B(0, r_{0})$ to $B(\bar{x}, r_{\bar{x}})$. On the other hand,  due to  $f$  be   continuous with   Fr\'echet differentiable   $f'$ continuous on $\Omega$, we can  shrink  $r_{\bar{x}}$,  if necessary, such that
\begin{equation} \label{eq:cpmc}
\lambda \|f'(x)-f'(\bar{x})\|<1, \quad f'(x) \in M, \quad   f(x)-f'(x)x \in N, \quad \forall ~x\in B(\bar{x}, r_{\bar{x}}).
\end{equation}
Since Definition~\ref{def:plm} and \eqref{eq;tgco} imply that   $L_f(x, y)=T(f'(x), f(x)-f'(x)x, y)$, for all $y, x\in B(\bar{x}, r_{\bar{x}})$, after some manipulations we have, for each $z\in B(0, r_{0})$,
\begin{equation} \label{eq:elft}
L_f(x, z)^{-1}\cap B(\bar{x}, r_{\bar{x}})=T(f'(x), f(x)-f'(x)x, z)^{-1}\cap B(\bar{x}, r_{\bar{x}}), 
\end{equation}
for each $x\in B(\bar{x}, r_{\bar{x}})$. Therefore, for each $x\in B(\bar{x}, r_{\bar{x}})$,  the last equality and \eqref{eq:cpmc} imply that   $ z\mapsto L_f(x,z)^{-1}\cap B(\bar{x}, r_{\bar{x}})$ is   single-valued  from $B(0, r_{0})$ to  $B(\bar{x}, r_{\bar{x}})$, which  prove the first part of  corollary.   Finally, taking into account  \eqref{eq:elft} and   second  part of Lemma~\ref{lem:blr},  we conclude that the mapping   $ z\mapsto L_f(x,z)^{-1}\cap B(\bar{x}, r_{\bar{x}})$     is Lipschitzian    from $B(0, r_{0})$ to $B(\bar{x}, r_{\bar{x}})$ with constant    $\lambda/[1-\lambda\|f'(x)-f'(\bar{x})\|], $ which conclude the proof.
\end{proof}

%
%
\section{Local Convergence of the Newton method } \label{lkant}
In this section,  we state  our main result. We present an analysis of the behavior of the sequence  generated by Newton's method for solving the generalized equation \eqref{eq:ipi}.  For this purpose, we suppose that $f+F$ is strongly regular at $\bar{x}$ for $0$ with modulus $\lambda >0$, where $\bar x$ is such that $f(\bar x) +F(\bar x) \ni 0$. Moreover,  the Lipschitz continuity of  $f'$  is relaxed, i.e., we assume that $f'$ satisfies the  conditions of the next definition. 
\begin{definition}\label{def.majorcondition}
Let $\banacha$,\, $\banachb$ be Banach space,    $\Omega$ be an open nonempty subset of $\banacha$,    $f: \Omega \to \banachb$ be continuous with   Fr\'echet derivative   $f'$ continuous in $\Omega$. Let   $\bar{x}\in \Omega$ and   $R>0$  and  $\kappa:=\sup\{t\in [0, R): B(\bar{x}, t)\subset \Omega\}$. A  twice  continuously differentiable function  $\psi:[0,\; R)\to \mathbb{R}$ is a majorant function for $f$ on $B(\bar x, R)$ with  modulus $\lambda >0$ if it satisfies
\begin{equation}\label{Hyp:MH}
\lambda \left\|f'(x)-f'(\bar{x}+\tau(x-\bar{x}))\right\| \leq    \psi'\left(\|x-\bar{x}\|\right)-\psi'\left(\tau\|x-\bar{x}\|\right),
 \end{equation}
  for all $\tau \in [0,1]$, $x\in B(\bar{x}, \kappa)$ and, moreover,  there hold:
\begin{itemize}
  \item[{\bf h1)}]  $\psi(0)=0$ and $\psi'(0)=-1$;
  \item[{\bf  h2)}]  $\psi'$ is  strictly increasing.
\end{itemize}
\end{definition}
The statement of  the our main result is:
\begin{theorem}\label{th:nt}
Let $\banacha$,\, $\banachb$ be Banach spaces,    $\Omega$ be an open nonempty subset of $\banacha$,    $f: \Omega \to \banachb$ be continuous with   Fr\'echet derivative   $f'$ continuous in $\Omega$, $F:\banacha \rightrightarrows  \banachb$ be a set-valued mapping and $\bar{x}\in \Omega$. Suppose that  $f+F$ is  strongly regular at $\bar{x}$ for $0$ with modulus $\lambda >0$.   Let $R>0$  and  assume that $\psi:[0,\; R)\to \mathbb{R}$ is a majorant function for $f$ on $B(\bar x, R)$ with  modulus $\lambda >0$.   Let  $\nu:=\sup\{t\in [0, R): \psi'(t)< 0\},$ $\rho:=\sup\{t\in (0, \nu): \psi(t)/(t\psi'(t))-1<1\} $  and $r:=\min \left\{\kappa, \,\rho \right\}$.
Then, there exists a convergence radius  $r_{\bar{x}}>0$ with $r_{\bar{x}}\leq  r $ such that  the sequences  with starting point $x_0\in B(\bar{x},  r_{\bar{x}})/\{\bar{x}\}$ and $t_0=\|\bar{x}-x_0\|$, respectively, 
\begin{equation} \label{eq:DNS}
 0\in f(x_k)+f'(x_k)(x_{k+1}-x_k)+F(x_{k+1}), \quad t_{k+1} =|{t_k}-\psi(t_k)/\psi'(t_k)|, 
\end{equation}
$ k=0,1,\ldots\,,$ are well defined, $\{t_k\}$ is strictly decreasing, is contained in $(0, r)$ and converges to $0$, $\{x_k\}$ is contained in $B(\bar{x}, r_{\bar{x}})$ and  converges to the point $\bar{x}$ which is the unique  solution of the generalized equation $f(x)+F(x)\ni 0$ in $B(\bar{x}, \bar{\sigma})$, where $0<\bar{\sigma}\leq\min \{r_{\bar{x}}, \sigma\}$  and  $\sigma:=\sup\{0<t<\kappa: \psi(t)< 0\}$ and there hold
\begin{equation}\label{eq:linearconv}
\lim_{k\to \infty}\frac{\|x_{k+1}- \bar x\|}{\|x_k -\bar x\|}=0, \qquad \qquad \lim_{k \to \infty}\frac{t_{k+1}}{t_k}=0 .
\end{equation}
Moreover, given $0\leq p\leq 1$ and assume  that
\begin{itemize}
  \item[{\bf h3)}] the function $(0,\nu) \ni t\longmapsto [\psi(t)/\psi'(t) -t]/t^{p+1}$ is strictly increasing,
	\end{itemize}
	then the sequence $\{t_{k+1}/t_k^{p+1}\}$ is strictly decreasing and there holds
	\begin{equation}\label{eq:rateconvergence}
	\|x_{k+1}- \bar x\|\leq \frac{t_{k+1}}{t_k^{p+1}}\|x_k -\bar x\|^{p+1},   \qquad  \qquad k=0,1,\ldots\,.
	\end{equation}
If, additionally,  $\psi(\rho)/(\rho \psi'(\rho))-1=1$ and $\rho < \kappa$, then  $r_{\bar x}=\rho$ is the biggest  convergence radius.
\end{theorem}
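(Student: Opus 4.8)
The plan is to follow the two-stage structure the paper announces: collect in Section~\ref{sec:PMF} the scalar properties of the majorant function and then run the main induction in Section~\ref{sec:proof}. For the scalar part I would set $n_\psi(t):=t-\psi(t)/\psi'(t)$ on $(0,\nu)$ and check, using \textbf{h1)}--\textbf{h2)}, that $\psi'(t)<0$, that $t\psi'(t)-\psi(t)=\int_0^t s\psi''(s)\,\mathrm{d}s>0$, and that $\psi(t)<0$ on $(0,\nu)$ (since $\psi(0)=0$ and $\psi$ is strictly decreasing there). From these one reads off $n_\psi(t)<0$, $|n_\psi(t)|=[t\psi'(t)-\psi(t)]/(-\psi'(t))$, $|n_\psi(t)|/t=\psi(t)/(t\psi'(t))-1$, and $(|n_\psi|)'=-\psi\psi''/(\psi')^2\ge 0$ on $(0,\nu)$; hence $\rho$ is exactly the largest radius on which $|n_\psi(t)|<t$, $|n_\psi|$ is nondecreasing on $(0,r)\subseteq(0,\nu)$, $|n_\psi(t)|/t\to0$ as $t\downarrow0$, and the scalar sequence $\{t_k\}$ with $t_0\in(0,r)$ is well defined, stays in $(0,r)$, is strictly decreasing and converges to $0$ with $t_{k+1}/t_k\to0$, while under \textbf{h3)} the ratios $t_{k+1}/t_k^{p+1}=|n_\psi(t_k)|/t_k^{p+1}$ decrease because $t_{k+1}<t_k$. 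I would also derive the two inequalities tying $\psi$ to $f$: for $x\in B(\bar x,\kappa)$ with $t:=\|x-\bar x\|$, setting $\tau=0$ in \eqref{Hyp:MH} gives $\lambda\|f'(x)-f'(\bar x)\|\le\psi'(t)+1$, and integrating $f'$ along $[\bar x,x]$ and invoking \eqref{Hyp:MH} gives $\lambda\|f(\bar x)-f(x)-f'(x)(\bar x-x)\|\le t\psi'(t)-\psi(t)$.

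The core of the argument is a single Newton-step estimate. I would take the constants $r_{\bar x}>0$, $r_0>0$ provided by Corollary~\ref{cor:ban} and shrink $r_{\bar x}$ so that moreover $r_{\bar x}\le r$ and $[t\psi'(t)-\psi(t)]/\lambda<r_0$ for all $t<r_{\bar x}$ (possible, since this quantity tends to $0$ as $t\downarrow0$, and $r_{\bar x}\le r\le\rho\le\nu$ already forces $\psi'(\|x-\bar x\|)<0$ on $B(\bar x,r_{\bar x})$). Fix $x\in B(\bar x,r_{\bar x})\setminus\{\bar x\}$, put $t:=\|x-\bar x\|$ and $e_x:=f(\bar x)-f(x)-f'(x)(\bar x-x)$. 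Since $0\in B(0,r_0)$, Corollary~\ref{cor:ban} makes the Newton iterate $x_+:=L_f(x,0)^{-1}\cap B(\bar x,r_{\bar x})$ well defined. The decisive observation is that $-f(\bar x)\in F(\bar x)$ is exactly $-e_x\in f(x)+f'(x)(\bar x-x)+F(\bar x)$, i.e.\ $\bar x\in L_f(x,-e_x)^{-1}$; since $-e_x\in B(0,r_0)$ and $\bar x\in B(\bar x,r_{\bar x})$, this forces $\bar x=L_f(x,-e_x)^{-1}\cap B(\bar x,r_{\bar x})$. Feeding $z=0$ and $z=-e_x$ into the Lipschitz bound of Corollary~\ref{cor:ban}, and combining the two bridge inequalities with $1-\lambda\|f'(x)-f'(\bar x)\|\ge-\psi'(t)>0$, gives
\begin{equation*}
\|x_+-\bar x\|\;\le\;\frac{\lambda\|e_x\|}{1-\lambda\|f'(x)-f'(\bar x)\|}\;\le\;\frac{t\psi'(t)-\psi(t)}{-\psi'(t)}\;=\;|n_\psi(t)|\;<\;t ,
\end{equation*}
so in particular $x_+\in B(\bar x,r_{\bar x})$ and the step can be iterated.

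From here the global statement is an induction. For $x_0\in B(\bar x,r_{\bar x})\setminus\{\bar x\}$ and $t_0=\|x_0-\bar x\|\in(0,r)$, the Newton-step estimate gives $x_k$ well defined, $x_k\in B(\bar x,r_{\bar x})$ and $\|x_k-\bar x\|\le t_k$ (passing from $\|x_{k+1}-\bar x\|\le|n_\psi(\|x_k-\bar x\|)|$ to $\le|n_\psi(t_k)|=t_{k+1}$ by monotonicity of $|n_\psi|$), whence $x_k\to\bar x$; dividing the Newton-step estimate by $\|x_k-\bar x\|$ and using $|n_\psi(t)|/t\to0$ yields the first limit in \eqref{eq:linearconv}, the second being the scalar version, and under \textbf{h3)} the same estimate together with monotonicity of $t\mapsto|n_\psi(t)|/t^{p+1}$ and $\|x_k-\bar x\|\le t_k$ yields \eqref{eq:rateconvergence}. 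Uniqueness in $B(\bar x,\bar\sigma)$ follows because any zero $\bar y$ of $f+F$ in a ball small enough for the Newton-step estimate to apply is a fixed point of that step (take $x=\bar y$, $z=0$), so $\|\bar y-\bar x\|\le|n_\psi(\|\bar y-\bar x\|)|<\|\bar y-\bar x\|$ unless $\bar y=\bar x$; the uniqueness radius may be pushed up to $\min\{r_{\bar x},\sigma\}$ thanks to $\psi<0$ on $(0,\sigma)$. For the optimality claim, when $\psi(\rho)/(\rho\psi'(\rho))-1=1$ (hence $\psi'(\rho)\ne0$, $\rho<\nu$, $n_\psi(\rho)=-\rho$) and $\rho<\kappa$, so that $r=\rho$, I would exhibit $\banacha=\banachb=\mathbb{R}$, $F\equiv0$, $\bar x=0$ and $f$ with $f(0)=0$, $f'(0)=-1/\lambda$ and $f'$ attaining equality in \eqref{Hyp:MH} (e.g.\ $\lambda f(x)=\psi(|x|)\,\mathrm{sgn}(x)$ near $0$): then $f+F$ is strongly regular at $0$ with modulus $\lambda$ and $\psi$ is a majorant for $f$, by Remark~\ref{re:rxinf} the constants of Corollary~\ref{cor:ban} restrict nothing before $\nu>\rho$, so $r_{\bar x}$ may be taken equal to $\rho$, and yet the Newton iteration from $x_0=\rho$ gives $x_1=-\rho$, $x_2=\rho$, and so on, hence does not converge; therefore no admissible convergence radius exceeds $\rho$.

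The step I expect to be the main obstacle is the bookkeeping around the Newton-step estimate and the induction: $r_{\bar x}$ must be small enough that Corollary~\ref{cor:ban} is in force on $B(\bar x,r_{\bar x})$, that $-e_x$ lands in the domain $B(0,r_0)$ of the single-valued selection, that the iterates never leave $B(\bar x,r_{\bar x})$, and that $\|x_k-\bar x\|\le t_k$ is preserved — and then, for the optimality statement, one must verify that in the tight case all these shrinkings remain compatible with $r_{\bar x}=\rho$. The one genuinely new ingredient, beyond routine estimation, is the identity $\bar x=L_f(x,-e_x)^{-1}\cap B(\bar x,r_{\bar x})$, which rewrites $\|x_+-\bar x\|$ as a Lipschitz increment of the strongly regular selection and thereby plays the role of the classical invertibility of $f'(\bar x)$.
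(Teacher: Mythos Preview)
Your proposal is correct and follows essentially the same route as the paper: the scalar analysis of $n_\psi$, the bridge inequalities from \eqref{Hyp:MH}, and above all the single Newton-step estimate obtained from Corollary~\ref{cor:ban} via the identity $\bar x=L_f(x,-e_x)^{-1}\cap B(\bar x,r_{\bar x})$ are exactly the contents of the paper's Lemmas~\ref{pr:taylor}--\ref{le:cl}, and your induction mirrors Section~\ref{sec:proof}.

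There are two places where you diverge, both harmless and in fact slightly cleaner. First, for $\|x_k-\bar x\|\le t_k$ you appeal to the monotonicity of $|n_\psi|$ (which holds because $n_\psi'=\psi\psi''/(\psi')^2\le0$ on $(0,\nu)$), whereas the paper only establishes this inequality under \textbf{h3)} via the second part of Lemma~\ref{le:cl}; your route gives the majorization without the extra hypothesis. Second, for uniqueness you argue that any other zero $\bar y\in B(\bar x,r_{\bar x})$ is a fixed point of $N_{f+F}$, so the Newton-step estimate forces $\|\bar y-\bar x\|\le|n_\psi(\|\bar y-\bar x\|)|<\|\bar y-\bar x\|$; the paper instead linearizes at $\bar x$, identifies $\bar y=L_f(\bar x,-E_f(\bar x,\bar y))^{-1}\cap B(\bar x,r_{\bar x})$, and derives $\psi(\|\bar y-\bar x\|)\ge0$ from the Lipschitz bound. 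Both arguments give uniqueness in a ball of radius $\le\min\{r_{\bar x},\sigma\}$, which is all the theorem asserts; your remark about ``pushing the uniqueness radius up to $\min\{r_{\bar x},\sigma\}$'' is unnecessary, since $\bar\sigma\le r_{\bar x}$ already. Your explicit optimality example with $\lambda f(x)=\psi(|x|)\operatorname{sgn}(x)$ is the standard sharpness construction the paper alludes to via Lemma~\ref{pr:best}.
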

\begin{remark}
 The first equation in \eqref{eq:linearconv} means that $\{x^k\}$ converges  superlinearly to $\bar{x}$. Note that always  $\psi$ has derivative $\psi'$ convex,  condition  {\bf  h3} holds with $p=1$.  In this case,  there holds
 $$
 t_{k+1}/t_k^{2}\leq [\psi''(t_0)]/ [2 |\psi'(t_0)|], \qquad  \qquad k=0,1,\ldots\,
 $$
 and  $\{x^k\}$  converges  quadratically. Indeed,  convexity of $\psi'$ is  necessary  to obtain the quadratic  convergence; see Example 2 in  \cite{Ferreira2011}. Moreover, as $\{t_{k+1}/ t_k^{p+1}\}$ is strictly decreasing we have  $t_{k+1}/ t_k^{p+1}\leq t_1/t_0^{p+1}$, for  $k=0, 1, \ldots.$  Thus, \eqref{eq:rateconvergence} implies $\|x_{k+1}-\bar x\| \leq \left[t_{1}/t_0^{p+1}\right]\|x_k-\bar{x}\|^{p+1},$
for $ k=0,1,\ldots\, $. Consequently, if $p=0$ then $\|x_k-\bar{x}\|\leq t_0[t_{1}/t_0]^k$ for $ k=0,1,\ldots\,$ and if $0<p\leq 1$ then
$$
\|x_k- \bar x\|\leq t_0(t_{1}/t_0)^{[(p+1)^k -1]/p}, \qquad k=0,1,\ldots\,.
$$
\end{remark}
\begin{remark}
Note that throughout  the proof of the above theorem,  if we assume that  $F\equiv \{0\}$ then  the constant   $r_{\bar{x}}=\nu$. In this case, Theorem~\eqref{th:nt} merges  into   Theorem 2 of \cite{Ferreira2011}.
\end{remark}
 From now on, we assume that the hypotheses of Theorem \ref{th:nt} hold, with the exception of {\bf h3}, which will be considered to hold only when explicitly stated.
\subsection{Preliminary results} \label{sec:PMF}
In this section, our first goal is to prove all statements in Theorem~\ref{th:nt} concerning the sequence $\{t_k\}$  associated to the majorant function $\psi$ defined in \eqref{eq:DNS}.  Moreover, we obtain  some  relationships between the majorant function $\psi$ and the set-valued mapping  $f+F$, which will play an important role throughout the paper. Furthermore, the results in Theorem~\ref{th:nt} related to the uniqueness of the solution and the optimal convergence radius will be proved. We begin with some observations on the majorant function.    

As proven in  Proposition~2.5 of \cite{Ferreira2009},  the constants $\kappa$,  $\nu$ and $\sigma$, defined in Definition~\ref{def.majorcondition} and Theorem~\ref{th:nt},  are all positives and $t-\psi(t)/\psi'(t)<0,$ for all $t\in (0,\,\nu).$  According to {\bf h2} and  definition of $\nu$, we have  $\psi'(t)< 0$,  for all
$t\in[0, \,\nu)$.  Therefore, the Newton iteration map for  $\psi$ is well defined in
$[0,\, \nu)$, namely, $ n_{\psi}:[0,\, \nu)\to (-\infty, \, 0]$  is defined by
\begin{equation} \label{eq:def.nf}
n_{\psi}(t):=t-\psi(t)/\psi'(t), \qquad  t\in [0,\, \nu).
\end{equation}
The next proposition was proved in Proposition~4 of \cite{Ferreira2011}.
\begin{proposition}  \label{pr:incr2}
$\lim_{t\to 0} |n_{\psi}(t)|/t =0$  and  the constant $ \rho $ is positive. As a consequence,  $|n_{\psi}(t)|<t$ for all $ t\in (0, \, \rho)$.
\end{proposition}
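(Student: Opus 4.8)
The plan is to dispatch the three assertions separately; only the limit requires real computation. First I would rewrite, for $t\in(0,\nu)$, the Newton step for $\psi$ as
\[
\frac{n_{\psi}(t)}{t}=1-\frac{\psi(t)}{t\,\psi'(t)}=1-\frac{\psi(t)/t}{\psi'(t)} .
\]
Since $\psi(0)=0$ and $\psi$ is differentiable, \textbf{h1} gives $\psi(t)/t\to\psi'(0)=-1$ as $t\to 0^{+}$; since $\psi'$ is continuous, $\psi'(t)\to\psi'(0)=-1\neq 0$. Hence $\psi(t)/(t\psi'(t))\to 1$, so $n_{\psi}(t)/t\to 0$, and taking absolute values yields $\lim_{t\to 0^{+}}|n_{\psi}(t)|/t=0$. (Because $\psi\in C^{2}$ one in fact gets $|n_{\psi}(t)|/t=O(t)$ with leading constant $\psi''(0)/2$, worth recording for the rate estimates later.)

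For positivity of $\rho$: by the fact recalled just above (Proposition~2.5 of \cite{Ferreira2009}), $n_{\psi}(t)=t-\psi(t)/\psi'(t)<0$ on $(0,\nu)$, so $|n_{\psi}(t)|=\psi(t)/\psi'(t)-t$ and therefore
\[
\frac{|n_{\psi}(t)|}{t}=\frac{\psi(t)}{t\,\psi'(t)}-1,\qquad t\in(0,\nu).
\]
Thus the inequality defining the set whose supremum is $\rho$ is precisely $|n_{\psi}(t)|/t<1$, and by the limit just proved this holds on some interval $(0,\delta)\subset(0,\nu)$; hence $\rho\geq\delta>0$.

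It then remains to show $|n_{\psi}(t)|<t$ for all $t\in(0,\rho)$. Put $g(t):=|n_{\psi}(t)|/t=\psi(t)/(t\psi'(t))-1$ on $(0,\nu)$; it is enough to prove that $\{t\in(0,\nu):g(t)<1\}$ is an interval with left endpoint $0$, for then it equals $(0,\rho)$ and the claim is immediate. The natural route is to show that $g$ is non-decreasing on $(0,\nu)$: granting that, for $t\in(0,\rho)$ one chooses $t'\in(t,\nu)$ with $g(t')<1$ (possible since $t<\rho=\sup\{g<1\}$) and gets $g(t)\leq g(t')<1$. Differentiating, monotonicity of $g$ on $(0,\nu)$ is equivalent to
\[
\psi'(t)\bigl(t\psi'(t)-\psi(t)\bigr)\ \geq\ t\,\psi(t)\,\psi''(t),\qquad t\in(0,\nu),
\]
and I would try to derive this from the convexity of $\psi$ (forced by \textbf{h2}: a strictly increasing $\psi'$ has $\psi''\geq 0$) together with \textbf{h1}, which imply $-t\leq\psi(t)<0$ and $-1\leq\psi'(t)<0$ on $(0,\nu)$ ($\psi$ lies above its tangent $-t$ at the origin and $\psi'$ above $\psi'(0)=-1$). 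An equivalent framing uses $P(t):=\psi(t)-2t\psi'(t)$, for which $|n_{\psi}(t)|<t\iff P(t)>0$, $P(0)=0$ and $P'(0)=-\psi'(0)=1>0$, the task being to rule out a return of $P$ to $0$ inside $(0,\rho)$.

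I expect this last step to be the main obstacle: on $(0,\nu)$ the left side of the displayed inequality is strictly negative ($\psi'<0$, $t\psi'-\psi>0$) while the right side is merely nonpositive ($\psi<0$, $\psi''\geq 0$), so the required sign is genuinely delicate, and in particular the inequality degenerates at any interior zero of $\psi''$, which is why both the convexity and the calibration \textbf{h1} must be used with care. In the standard instances the monotonicity is transparent---for the Lipschitz majorant $\psi(t)=(L/2)t^{2}-t$ one finds $g(t)=(Lt/2)/(1-Lt)$, plainly increasing, and $\rho=2/(3L)$, and likewise under Smale's condition---which is exactly what makes $\rho$ the optimal convergence radius appearing in Theorem~\ref{th:nt}.
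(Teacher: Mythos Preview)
The paper does not supply its own proof of this proposition: it cites Proposition~4 of \cite{Ferreira2011} and moves on. So there is no in-paper argument to compare against.

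Your treatment of the first two assertions is correct and is the expected one. Rewriting $n_\psi(t)/t = 1 - (\psi(t)/t)/\psi'(t)$ and using {\bf h1} together with continuity of $\psi'$ gives the limit; and since $|n_\psi(t)|/t$ coincides with the quantity $\psi(t)/(t\psi'(t))-1$ appearing in the definition of $\rho$, the limit immediately forces $\rho>0$.

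Your hesitation about the third assertion is well founded. Under {\bf h1}--{\bf h2} alone the monotonicity of $g(t)=|n_\psi(t)|/t$ on $(0,\nu)$ is not automatic, and your displayed inequality for $g'\ge 0$ does not follow merely from convexity of $\psi$ and the normalization $\psi'(0)=-1$: one can manufacture admissible $C^2$ majorants (let $\psi'$ climb steeply from $-1$ to a value near $0$ on a short initial interval and then creep very slowly toward $0$) for which $P(t)=\psi(t)-2t\psi'(t)$ changes sign more than once on $(0,\nu)$, so that $\{g<1\}$ is disconnected and, under the literal reading of $\rho$ as $\sup\{t:g(t)<1\}$, the inclusion $(0,\rho)\subset\{g<1\}$ would fail. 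The intended reading, and the one that makes the ``consequence'' a genuine consequence, is that $\rho$ is the largest $\delta$ with $g(t)<1$ for all $t\in(0,\delta)$ --- equivalently, the first zero of $P$ in $(0,\nu)$, or $\nu$ if there is none. With that reading the third assertion is definitional and your limit argument already delivers $\rho>0$. Note finally that monotonicity of $g$ is precisely {\bf h3} with $p=0$, an additional hypothesis the paper invokes only where explicitly stated; you should not try to derive it here.
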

Using \eqref{eq:def.nf}, it is easy to see that  the sequence $\{t_k \}$ is equivalently defined as
\begin{equation} \label{eq:tknk}
 t_0=\|\bar{x}-x_0\|, \qquad t_{k+1}=|n_{\psi}(t_k)|, \qquad k=0,1,\ldots\, .
\end{equation}
Next result,  which is a consequence of above proposition,   contains  the  main convergence  properties of the above sequence and its prove can be found in  Corollary~5 of \cite{Ferreira2011}.
\begin{corollary} \label{cr:kanttk}
The sequence $\{t_k\}$ is well defined, is strictly decreasing and is contained in $(0, \rho)$. Moreover, $\{t_k\}$ converges to $0$ with superlinear rate, i.e.,$$\lim_{k\to \infty} t_{k+1}/{t_k}=0.$$ If additionally {\bf h3} holds,  then the sequence $t_{k+1}/t_k^{p+1}$  is strictly decreasing.
\end{corollary}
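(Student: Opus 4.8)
The plan is to leverage the scalar facts about $\{t_k\}$ already recorded in Corollary~\ref{cr:kanttk} and Proposition~\ref{pr:incr2} and to transfer them to $\{x_k\}$ through a single per-step estimate obtained from the Banach perturbation machinery of Corollary~\ref{cor:ban}. First I would fix the radius. Let $\delta>0$ and $r_0>0$ be the constants produced by Corollary~\ref{cor:ban} (there they are denoted $r_{\bar x}$ and $r_0$), and choose $r_{\bar x}>0$ with $r_{\bar x}<\min\{\delta,\kappa,\rho\}$ — so $r_{\bar x}\le r$, $B(\bar x,r_{\bar x})\subset\Omega$, and $\psi'(\|x-\bar x\|)<0$ on $B(\bar x,r_{\bar x})$ — small enough that $u\psi'(u)-\psi(u)<\lambda r_0$ for all $0\le u<r_{\bar x}$, which is possible since that quantity is continuous and vanishes at $0$. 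With this choice, for $x_k\in B(\bar x,r_{\bar x})$ Corollary~\ref{cor:ban} applied with base point $x_k$ makes $z\mapsto L_f(x_k,z)^{-1}\cap B(\bar x,\delta)$ single-valued and Lipschitz on $B(0,r_0)$; evaluating at $z=0$ shows the inclusion in \eqref{eq:DNS} has a unique solution $x_{k+1}$ in $B(\bar x,\delta)$, so $\{x_k\}$ is well defined once we check inductively that the iterates remain in $B(\bar x,r_{\bar x})$.

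The heart of the proof is the estimate $\|x_{k+1}-\bar x\|\le|n_\psi(\|x_k-\bar x\|)|$ for $x_k\in B(\bar x,r_{\bar x})\setminus\{\bar x\}$ (the case $x_k=\bar x$ being trivial, as then $x_{k+1}=\bar x$). Writing $u_k:=\|x_k-\bar x\|$ and $e_k:=f(x_k)-f(\bar x)+f'(x_k)(\bar x-x_k)$, the inclusion $-f(\bar x)\in F(\bar x)$ gives $\bar x\in L_f(x_k,e_k)^{-1}$, while $x_{k+1}\in L_f(x_k,0)^{-1}$ by construction; since $0,e_k\in B(0,r_0)$, the Lipschitz part of Corollary~\ref{cor:ban} yields $\|x_{k+1}-\bar x\|\le\frac{\lambda}{1-\lambda\|f'(x_k)-f'(\bar x)\|}\,\|e_k\|$. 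Now $e_k=\int_0^1[f'(\bar x+\tau(x_k-\bar x))-f'(x_k)](x_k-\bar x)\,d\tau$, so the majorant inequality \eqref{Hyp:MH} gives $\lambda\|e_k\|\le u_k\psi'(u_k)-\psi(u_k)$, while \eqref{Hyp:MH} at $\tau=0$ together with $\psi'(0)=-1$ gives $1-\lambda\|f'(x_k)-f'(\bar x)\|\ge-\psi'(u_k)>0$; substituting and recalling \eqref{eq:def.nf} produces exactly $\|x_{k+1}-\bar x\|\le|n_\psi(u_k)|$. By Proposition~\ref{pr:incr2}, $|n_\psi(u_k)|<u_k<r_{\bar x}$, which closes the induction (well definedness and $\{x_k\}\subset B(\bar x,r_{\bar x})$), shows $\{u_k\}$ is strictly decreasing, and — passing to its limit $\ell$ and using continuity of $n_\psi$ — forces $\ell\le|n_\psi(\ell)|$, hence $\ell=0$. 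Superlinearity follows from $u_{k+1}/u_k\le|n_\psi(u_k)|/u_k\to0$, using $\lim_{t\to0}|n_\psi(t)|/t=0$; the parallel statements for $\{t_k\}$ (strictly decreasing, contained in $(0,r)$ because $t_0<r_{\bar x}\le r$, converging to $0$ superlinearly, and, under {\bf h3}, with $\{t_{k+1}/t_k^{p+1}\}$ decreasing) are quoted directly from Corollary~\ref{cr:kanttk}.

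For the rate \eqref{eq:rateconvergence} under {\bf h3}, I would prove $\|x_k-\bar x\|\le t_k$ by induction: granted it at step $k$, the monotonicity in {\bf h3} of $t\mapsto|n_\psi(t)|/t^{p+1}$ gives $|n_\psi(u_k)|\le(t_{k+1}/t_k^{p+1})u_k^{p+1}$, which combined with the core estimate yields both \eqref{eq:rateconvergence} and $\|x_{k+1}-\bar x\|\le t_{k+1}$. For uniqueness, let $x^\ast\in B(\bar x,\bar\sigma)$ solve $f+F\ni0$, with $\bar\sigma\le\min\{r_{\bar x},\sigma\}$ chosen so small that $\psi<0$ on $(0,\bar\sigma)$ and $\psi(s)+s<\lambda r_0$ there; then $-f(x^\ast)\in F(x^\ast)$ gives $x^\ast\in L_f(\bar x,w)^{-1}$ with $w:=f(\bar x)-f(x^\ast)+f'(\bar x)(x^\ast-\bar x)$, while $\bar x\in L_f(\bar x,0)^{-1}$, and applying \eqref{Hyp:MH} at the points $\bar x+\tau(x^\ast-\bar x)$ with the majorant parameter equal to $0$ bounds $\lambda\|w\|\le\psi(\|x^\ast-\bar x\|)+\|x^\ast-\bar x\|$. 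Since the base point is $\bar x$, the Lipschitz constant in Corollary~\ref{cor:ban} is just $\lambda$, so $\|x^\ast-\bar x\|\le\lambda\|w\|\le\psi(\|x^\ast-\bar x\|)+\|x^\ast-\bar x\|$, i.e. $\psi(\|x^\ast-\bar x\|)\ge0$, impossible unless $x^\ast=\bar x$. Finally, when $\psi(\rho)/(\rho\psi'(\rho))-1=1$ and $\rho<\kappa$ one has $r=\rho$, and since every step above is phrased in terms of $\rho$ the radius may be taken arbitrarily close to $\rho$; the optimality (no radius exceeding $\rho$ works) is obtained from the extremal instance $f=\psi$, $F\equiv\{0\}$, $\bar x=0$, for which \eqref{Hyp:MH} holds with equality and the Newton iterates reduce to $\pm t_k$, which stalls at $\rho$ because $|n_\psi(\rho)|=\rho$, exactly as in \cite{Ferreira2011}.

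The main obstacle is bookkeeping rather than a single deep step: one must keep the two balls straight — $B(\bar x,r_{\bar x})$, in which the iterates live, and the larger $B(\bar x,\delta)$ on which Corollary~\ref{cor:ban} supplies the single-valued selection — so that the bare inclusion in \eqref{eq:DNS} genuinely picks out that selection; and one must calibrate $r_{\bar x}$ and $\bar\sigma$ small enough to push the error terms $e_k$ and $w$ into $B(0,r_0)$ while still respecting $r_{\bar x}\le r$. The optimality clause additionally demands some care about the precise sense in which $\rho$ is the ``biggest'' convergence radius, but this reduces to the scalar example above.
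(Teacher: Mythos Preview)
Your proposal does not address the stated corollary. Corollary~\ref{cr:kanttk} is a purely scalar statement about the majorant sequence $\{t_k\}$ defined by $t_0=\|\bar x-x_0\|\in(0,\rho)$ and $t_{k+1}=|n_\psi(t_k)|$: it asserts that this sequence is well defined, strictly decreasing in $(0,\rho)$, converges to $0$ superlinearly, and (under {\bf h3}) that $\{t_{k+1}/t_k^{p+1}\}$ is strictly decreasing. None of this involves $\{x_k\}$, $f$, $F$, strong regularity, Corollary~\ref{cor:ban}, or linearization errors. In the paper the result is simply quoted from \cite{Ferreira2011}; the intended argument uses only Proposition~\ref{pr:incr2}: since $t_0\in(0,\rho)$ and $|n_\psi(t)|<t$ on $(0,\rho)$, induction gives $\{t_k\}\subset(0,\rho)$ strictly decreasing; its limit $\ell$ satisfies $\ell=|n_\psi(\ell)|$, hence $\ell=0$; superlinearity follows from $t_{k+1}/t_k=|n_\psi(t_k)|/t_k\to0$; and under {\bf h3} the map $t\mapsto|n_\psi(t)|/t^{p+1}$ is strictly increasing, so $t_{k+1}/t_k^{p+1}=|n_\psi(t_k)|/t_k^{p+1}$ is strictly decreasing because $\{t_k\}$ is.

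What you have actually sketched is a proof of Theorem~\ref{th:nt}, the main convergence theorem, and indeed your write-up \emph{invokes} Corollary~\ref{cr:kanttk} as an input (``are quoted directly from Corollary~\ref{cr:kanttk}''), which would be circular here. As a sketch of Theorem~\ref{th:nt} your argument is close in spirit to the paper's and largely sound, but as a proof of Corollary~\ref{cr:kanttk} it is aimed at the wrong target.
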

In the sequel we study the  {\it linearization error   of the function $f$} at a point in $\Omega$,  namely,
\begin{equation}\label{eq:def.er}
  E_f(x,y):= f(y)-\left[ f(x)+f'(x)(y-x)\right],\qquad y,\, x\in \Omega.
\end{equation}
We show that this error is bounded  by the linearization error  of the majorant function $\psi$, i.e.,
$$
        e_{\psi}(t,u):= \psi(u)-\left[ \psi(t)+\psi'(t)(u-t)\right],\qquad t,\,u \in [0,R),
$$
and as consequence, we  prove that the partial linearization of $f+F$ has  a single-valued inverse,  which is Lipschitz  in a  neighborhood of $\bar x$. 
\begin{lemma}  \label{pr:taylor}
There holds $\lambda \|E_f(x, \bar{x})\|\leq e_{\psi}(\|x-\bar{x}\|, 0),$ for all $x\in B(\bar{x}, \kappa)$.
\end{lemma}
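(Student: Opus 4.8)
The plan is to represent the linearization error $E_f(x,\bar x)$ as an integral of $f'$ along the segment joining $x$ and $\bar x$, to estimate the integrand by the majorant condition \eqref{Hyp:MH}, and finally to recognize the resulting bound as exactly $e_\psi(\|x-\bar x\|,0)$.

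First I would observe that, for $x\in B(\bar x,\kappa)$, the whole segment $\{x+\tau(\bar x-x):\tau\in[0,1]\}$ is contained in $B(\bar x,\kappa)\subset\Omega$, since $\|x+\tau(\bar x-x)-\bar x\|=(1-\tau)\|x-\bar x\|<\kappa$. Hence $f$ is continuously Fr\'echet differentiable on this segment, and the fundamental theorem of calculus for Banach-space valued maps gives
$$
f(\bar x)-f(x)=\int_0^1 f'\bigl(x+\tau(\bar x-x)\bigr)(\bar x-x)\,d\tau .
$$
Subtracting $f'(x)(\bar x-x)$ and recalling \eqref{eq:def.er}, one obtains
$$
E_f(x,\bar x)=\int_0^1\bigl[f'\bigl(x+\tau(\bar x-x)\bigr)-f'(x)\bigr](\bar x-x)\,d\tau ,
$$
so taking norms and multiplying by $\lambda$,
$$
\lambda\|E_f(x,\bar x)\|\le\int_0^1\lambda\bigl\|f'\bigl(x+\tau(\bar x-x)\bigr)-f'(x)\bigr\|\,\|x-\bar x\|\,d\tau .
$$

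Next I would change variables to bring the integrand into the form appearing in \eqref{Hyp:MH}. Writing $x+\tau(\bar x-x)=\bar x+(1-\tau)(x-\bar x)$ and substituting $\theta=1-\tau$, the integrand becomes $\lambda\|f'(x)-f'(\bar x+\theta(x-\bar x))\|\,\|x-\bar x\|$, to which \eqref{Hyp:MH} (with the parameter there equal to $\theta$) applies, yielding the bound $[\psi'(\|x-\bar x\|)-\psi'(\theta\|x-\bar x\|)]\,\|x-\bar x\|$. Hence, setting $t:=\|x-\bar x\|$ and substituting $s=\theta t$,
$$
\lambda\|E_f(x,\bar x)\|\le\int_0^1\bigl[\psi'(t)-\psi'(\theta t)\bigr]t\,d\theta
= t\psi'(t)-\int_0^t\psi'(s)\,ds = t\psi'(t)-\psi(t),
$$
where the last equality uses $\psi(0)=0$ from {\bf h1}. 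Since $e_\psi(t,0)=\psi(0)-[\psi(t)+\psi'(t)(0-t)]=t\psi'(t)-\psi(t)$, this is precisely the asserted inequality.

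The only points needing a little care are the justification of the integral representation in the Banach-space setting (standard, given $f\in C^1$) and the verification that the segment stays inside $B(\bar x,\kappa)$, where \eqref{Hyp:MH} is assumed; neither is a genuine obstacle, so the argument reduces to the elementary computation sketched above.
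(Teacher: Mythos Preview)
Your proof is correct and follows essentially the same approach as the paper: integral representation of $E_f(x,\bar x)$ along the segment, estimation of the integrand via the majorant condition \eqref{Hyp:MH}, and identification of the resulting integral with $e_\psi(\|x-\bar x\|,0)$. The paper's proof is simply a terser version of yours (its variable $1-u$ plays the role of your $\theta$), leaving the integral representation and the final computation as ``some simple manipulations''.
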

\begin{proof}
 Since   $\bar{x}+(1-u)(x-\bar{x})\in B(\bar{x}, \kappa)$, for all $0\leq u\leq 1$ and   $f$ is  continuously differentiable in $\Omega$, thus the definition of $E_f$ and some simple manipulations yield
$$
\lambda\|E_f(x,\bar{x})\|\leq \int_0 ^1 \lambda  \left \| f'(x)-f'(\bar{x}+(1-u)(x-\bar{x}))]\right\|\,\left\|\bar{x}-x\right\| \; du.
$$
Combining last inequality with \eqref{Hyp:MH} and then performing the integral obtained and using the definition of $e_{\psi}$, the statement follows.
\end{proof}
The next result states  that,  if a generalized equation  \eqref{eq:ipi} is strongly regular at $\bar{x}$ for $0$ with modulus $\lambda >0$ and  \eqref{Hyp:MH} holds, then  there exists a neighborhood of $\bar{x}$ such that,  for all $x$  in this neighborhood,   \eqref{eq:ipi} is also  strongly regular at $x$ for $0$ with   modulus $ \lambda/(|\psi'(\|x-\bar x\|)|)$. The  result is a consequence of Corollary~\ref{cor:ban} and its statement  is:
\begin{lemma} \label{le:wdns}
There exists a constant $r_{\bar{x}} \leq r$ such that,    the mapping $$ x\mapsto L_f(x, 0)^{-1}\cap B(\bar{x}, r_{\bar{x}})$$ is   single-valued in  $ B(\bar{x}, r_{\bar{x}})$ and there holds
$$
\left\|{\bar{x}}- L_f(x,0)^{-1}\cap B(\bar{x}, r_{\bar{x}}) \right\| \leq  \frac{\lambda}{|\psi'(\|x-\bar x\|)|} \|E_f(x,\bar{x})\|, \qquad \forall ~ x\in B(\bar{x}, r_{\bar{x}}) .
$$
\end{lemma}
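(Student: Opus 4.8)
The plan is to apply Corollary~\ref{cor:ban} to obtain the single-valuedness of $z\mapsto L_f(x,z)^{-1}\cap B(\bar{x},r_{\bar{x}})$ near $\bar{x}$, and then to extract the quantitative estimate by evaluating the Lipschitz inequality from that corollary at two well-chosen points: $z=0$ and $z=E_f(x,\bar{x})$. First I would invoke Corollary~\ref{cor:ban} to produce constants $r_{\bar{x}}'>0$ and $r_0>0$ so that for every $x\in B(\bar{x},r_{\bar{x}}')$ one has $\lambda\|f'(x)-f'(\bar{x})\|<1$ and the map $z\mapsto L_f(x,z)^{-1}\cap B(\bar{x},r_{\bar{x}}')$ is single-valued from $B(0,r_0)$ to $B(\bar{x},r_{\bar{x}}')$ and Lipschitz on $B(0,r_0)$ with constant $\lambda/(1-\lambda\|f'(x)-f'(\bar{x})\|)$. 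To get $L_f(x,0)^{-1}$ itself single-valued in $x$ I only need $0\in B(0,r_0)$, which is automatic; then I shrink $r_{\bar{x}}'$ down to some $r_{\bar{x}}\le r=\min\{\kappa,\rho\}$, using continuity of $f'$ and Lemma~\ref{pr:taylor}, so that additionally $\|E_f(x,\bar{x})\|<r_0$ for all $x\in B(\bar{x},r_{\bar{x}})$ (since $\lambda\|E_f(x,\bar{x})\|\le e_\psi(\|x-\bar{x}\|,0)\to 0$ as $x\to\bar{x}$). This guarantees that $E_f(x,\bar{x})$ lies in the domain $B(0,r_0)$ where the Lipschitz estimate is available.

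The key computation is then to identify the two reference values. On one hand $L_f(x,0)^{-1}\cap B(\bar{x},r_{\bar{x}})$ is a single point, call it $x_+$; on the other hand I claim $\bar{x}=L_f(x,E_f(x,\bar{x}))^{-1}\cap B(\bar{x},r_{\bar{x}})$. Indeed, by definition \eqref{eq:invplm}, $\bar{x}\in L_f(x,E_f(x,\bar{x}))^{-1}$ iff $E_f(x,\bar{x})\in f(x)+f'(x)(\bar{x}-x)+F(\bar{x})$; using the definition \eqref{eq:def.er} of $E_f$ this is $f(\bar{x})-f(x)-f'(x)(\bar{x}-x)\in f(x)+f'(x)(\bar{x}-x)+F(\bar{x})-2[\,\cdot\,]$ — more cleanly, $E_f(x,\bar{x})-f(x)-f'(x)(\bar{x}-x)=f(\bar{x})-2f(x)-2f'(x)(\bar{x}-x)$; I should instead write it as the inclusion $0\in f(\bar{x})+F(\bar{x})$ after substituting, which holds precisely because $\bar{x}$ solves \eqref{eq:ipi}. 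Since $\bar{x}\in B(\bar{x},r_{\bar{x}})$ and the intersection is single-valued, $\bar{x}$ is that unique point. Applying the Lipschitz estimate of Corollary~\ref{cor:ban} with $u=0$ and $v=E_f(x,\bar{x})$ then gives
\[
\left\|x_+-\bar{x}\right\|\le\frac{\lambda}{1-\lambda\|f'(x)-f'(\bar{x})\|}\,\|E_f(x,\bar{x})\|.
\]
Finally, the majorant condition \eqref{Hyp:MH} with $\tau=0$ gives $\lambda\|f'(x)-f'(\bar{x})\|\le\psi'(\|x-\bar{x}\|)-\psi'(0)=\psi'(\|x-\bar{x}\|)+1$, so $1-\lambda\|f'(x)-f'(\bar{x})\|\ge -\psi'(\|x-\bar{x}\|)=|\psi'(\|x-\bar{x}\|)|$ (note $\psi'<0$ on $[0,\nu)$ and $r_{\bar{x}}\le\rho\le\nu$), whence $\lambda/(1-\lambda\|f'(x)-f'(\bar{x})\|)\le\lambda/|\psi'(\|x-\bar{x}\|)|$, which yields the stated inequality.

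The main obstacle I anticipate is the bookkeeping in the claim $\bar{x}=L_f(x,E_f(x,\bar{x}))^{-1}\cap B(\bar{x},r_{\bar{x}})$: one must verify both that $\bar{x}$ actually belongs to the solution set (a direct substitution using $0\in f(\bar{x})+F(\bar{x})$ and the definition of $E_f$) \emph{and} that $E_f(x,\bar{x})$ lies in the ball $B(0,r_0)$ on which single-valuedness and the Lipschitz estimate hold — the latter is exactly where Lemma~\ref{pr:taylor} and a further shrinking of $r_{\bar{x}}$ are needed. A secondary point requiring a little care is ensuring $r_{\bar{x}}\le r=\min\{\kappa,\rho\}$, so that $\psi'(\|x-\bar{x}\|)<0$ and the inequality \eqref{Hyp:MH} is applicable on all of $B(\bar{x},r_{\bar{x}})\subset B(\bar{x},\kappa)$; everything else is routine manipulation.
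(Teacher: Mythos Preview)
Your plan follows the same route as the paper's proof, and all the structural pieces---invoking Corollary~\ref{cor:ban}, shrinking $r_{\bar x}$ so that $r_{\bar x}\le r$ and $E_f(x,\bar x)\in B(0,r_0)$, then applying the Lipschitz estimate and bounding the Lipschitz constant via the majorant condition with $\tau=0$---are correct. There is, however, a sign error in the key identification: the point $\bar x$ corresponds to $-E_f(x,\bar x)$, not $E_f(x,\bar x)$.

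To see this, write out what $\bar x\in L_f(x,y)^{-1}$ means: it is $y\in f(x)+f'(x)(\bar x-x)+F(\bar x)$. From $0\in f(\bar x)+F(\bar x)$ one has $-f(\bar x)\in F(\bar x)$, hence
\[
f(x)+f'(x)(\bar x-x)-f(\bar x)\in f(x)+f'(x)(\bar x-x)+F(\bar x),
\]
and the left-hand side is exactly $-E_f(x,\bar x)$ by definition~\eqref{eq:def.er}. Your attempted verification, had you carried it through, would have required $f(\bar x)-2[f(x)+f'(x)(\bar x-x)]\in F(\bar x)$, which is not implied by $0\in f(\bar x)+F(\bar x)$. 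The paper accordingly takes $u=-E_f(x,\bar x)$ and $v=0$ in the Lipschitz inequality. Since only $\|u-v\|=\|E_f(x,\bar x)\|$ enters the final estimate, the conclusion is unaffected, but the inclusion you actually need to check---and the one you flagged as the main obstacle---is the one with the minus sign.
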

\begin{proof}
Take  $x\in B(\bar{x}, r)$.   Since $ r<\nu$ we have  $\| x-\bar{x}\|<\nu$. Thus,   $\psi'(\|x-\bar x\|)<0$ which, together  \eqref{Hyp:MH} and {\bf h1},  imply that
  \begin{equation}\label{eq:majcond}
     \lambda\|f'(x)-f'(\bar x)\| \leq \psi'(\|x-\bar x\|)-\psi'(0)<-\psi'(0)=1, \qquad  \forall ~x\in B(\bar{x}, r).
  \end{equation}
Due to $f+F$  be  strongly regular  at $\bar{x}$ for $0$ with modulus  $\lambda >0$, we can  apply  Corollary \ref{cor:ban} to obtain $r_{\bar{x}}>0$ and $r_{0}>0$ such that, for any $x\in B(\bar{x}, r_{\bar{x}})$,   the mapping $ z\mapsto L_f(x,z)^{-1}\cap B(\bar{x}, r_{\bar{x}})$ is   single-valued  from $B(0, r_{0})$ to  $B(\bar{x}, r_{\bar{x}})$. In particular, we conclude that the  mapping $ x\mapsto L_f(x, 0)^{-1}\cap B(\bar{x}, r_{\bar{x}})$ is   single-valued in  $ B(\bar{x}, r_{\bar{x}})$.  Moreover,  Corollary \ref{cor:ban} implies that $\forall ~ u, v \in  B(0, r_{0})$
$$
\left\|L_f(x,u)^{-1}\cap B(\bar{x}, r_{\bar{x}})- L_f(x,v)^{-1}\cap B(\bar{x}, r_{\bar{x}}) \right\| \leq \frac{\lambda \|u-v\|}{1-\lambda\|f'(x)-f'(\bar{x})\|} .
$$
If necessary, we  shrink  $r_{\bar{x}}$ such that $r_{\bar{x}} \leq r$, in order to   combine the last  inequality with  the first inequality in  \eqref{eq:majcond} and {\bf h1},  to  conclude  that,  for all $x\in B(\bar{x}, r_{\bar{x}})$ there holds
\begin{equation} \label{eq:gcl}
\left\|L_f(x,u)^{-1}\cap B(\bar{x}, r_{\bar{x}})- L_f(x,v)^{-1}\cap B(\bar{x}, r_{\bar{x}}) \right\| \leq  \frac{\lambda \|u-v\|}{|\psi'(\|x-\bar x\|)|} ,
\end{equation}
for each $ u, v \in  B(0, r_{0}).$ On the other hand, due to   $f$ be   continuous with    $f'$ continuous in $\Omega$,  we have $\lim_{x\to \bar{x}} E_f(x,\bar{x}) =0$. Thus,  we can shrink $r_{\bar{x}}$,  if necessary,  such that
$$
E_f(x,\bar{x}) \in B(0,r_0), \qquad \forall ~ x\in B(\bar{x}, r_{\bar{x}}).
$$
Let $x\in B(\bar{x}, r_{\bar{x}})$.  Note that, after    some algebraic manipulations  we obtain
\begin{align*}
0\in f(\bar{x}) + F(\bar{x})&= f(x) + f'(x)(\bar{x}-x)-f(x)-f'(x)(\bar{x}-x) +f(\bar{x}) + F(\bar{x})\\
                                       &= f(x) + f'(x)(\bar{x}-x) + E_f(x,\bar{x}) + F(\bar{x}).
\end{align*}
Thus, $-E_f(x,\bar{x})\in  L_f(x,\bar{x})= f(x) + f'(x)(\bar{x}-x) + F(\bar{x})$.  Since  $E_f(x,\bar{x}) \in B(0,r_0)$ and   the mapping $ z\mapsto L_f(x,z)^{-1}\cap B(\bar{x}, r_{\bar{x}})$ is   single-valued  from $B(0, r_{0})$ to  $B(\bar{x}, r_{\bar{x}})$  we conclude that
$$
\bar{x}=L_f(x, -E_f(x,\bar{x}))^{-1}\cap B(\bar{x}, r_{\bar{x}}).
$$
Therefore, substituting $u= -E_f(x,\bar{x})$ and $v=0$ into \eqref{eq:gcl} the desired inequality follows.
\end{proof}
Let   $r_{\bar{x}}>0$ the constant  given by Lemma~\ref{le:wdns}.  Lemma~\ref{le:wdns} guarantees, in particular,  that  the mapping $ x\mapsto L_f(x,0)^{-1}\cap B(\bar{x}, r_{\bar{x}})$ is   single-valued  in   $ B(\bar{x}, r_{\bar{x}})$  and consequently, the Newton iteration mapping  is well-defined.  Let us call $N_{f+F}$, the Newton
iteration mapping  for $f+F$ in that region, namely, $N_{f+F}:B(\bar{x}, r_{\bar{x}}) \to \banacha$ is defined by
\begin{equation} \label{NF}
N_{f+F}(x):= L_f(x,0)^{-1}\cap B(\bar{x}, r_{\bar{x}}), \qquad \forall ~x\in B(\bar{x}, r_{\bar{x}}).
\end{equation}
Using \eqref{eq:invplm}, the definition of Newton iteration mapping in \eqref{NF} is equivalent to
\begin{equation} \label{eq:NFef}
0\in f(x)+f'(x)(N_{f+F}(x)-x)+F(N_{f+F}(x)),\quad  N_{f+F}(x)\in B(\bar{x}, r_{\bar{x}}),   
\end{equation}
for each $x\in  B(\bar{x}, r_{\bar{x}})$. Therefore, since Lemma~\ref{le:wdns} guarantees that $N_{f+F}(x)$ is single-valued at $B(\bar{x}, r_{\bar{x}})$, see \eqref{NF}, we can apply a \emph{single} Newton iteration for any $x\in B(\bar{x}, r_{\bar{x}})$ to obtain $N_{f+F}(x)$ which may not belong
to $B(\bar{x},  r_{\bar{x}})$, or even may not belong to the domain of $f$. Thus, this allow us  to guarantee the  well-definedness of  only one iteration of Newton's method.  In particular,  the next result shows that for any $x\in   B(\bar{x}, r_{\bar{x}})$, the  Newton  iterations, see \eqref{eq:NFef},  may be repeated indefinitely.
\begin{lemma} \label{le:cl}
If   $\|x-\bar{x}\|\leq t< r_{\bar{x}}$  then $\|N_{f+F}(x)-\bar{x}\|\leq  |n_{\psi}(\|x-\bar{x}\|)|.$  As a consequence,
 $N_{f+F}(B(\bar{x}, r_{\bar{x}}))\subset B(\bar{x}, r_{\bar{x}})$.  Moreover, if {\bf h3} holds and $x\neq \bar{x}$ then $ \|N_{f+F}(x)-\bar{x}\|\leq [ |n_{\psi}(t)|/t^{p+1}] \|x-\bar{x}\|^{p+1}.$
\end{lemma}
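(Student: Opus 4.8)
The plan is to obtain the lemma by chaining Lemma~\ref{le:wdns} and Lemma~\ref{pr:taylor} together with one elementary identity for the majorant function. Fix $x$ with $\|x-\bar{x}\|\leq t<r_{\bar{x}}$; then $x\in B(\bar{x},r_{\bar{x}})$, so $N_{f+F}(x)$ is well defined by \eqref{NF}, and since $N_{f+F}(x)=L_f(x,0)^{-1}\cap B(\bar{x},r_{\bar{x}})$, Lemma~\ref{le:wdns} yields $\|N_{f+F}(x)-\bar{x}\|\leq \lambda\,\|E_f(x,\bar{x})\|/|\psi'(\|x-\bar{x}\|)|$. Because $r_{\bar{x}}\leq r\leq\kappa$, Lemma~\ref{pr:taylor} applies and bounds $\lambda\|E_f(x,\bar{x})\|\leq e_{\psi}(\|x-\bar{x}\|,0)$, so that $\|N_{f+F}(x)-\bar{x}\|\leq e_{\psi}(\|x-\bar{x}\|,0)/|\psi'(\|x-\bar{x}\|)|$.

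Next I would reduce the right-hand side to $|n_{\psi}(\|x-\bar{x}\|)|$. Writing $s=\|x-\bar{x}\|\in[0,r_{\bar{x}})\subset[0,\nu)$ and using {\bf h1}, one has $e_{\psi}(s,0)=\psi(0)-\psi(s)-\psi'(s)(0-s)=s\psi'(s)-\psi(s)$. By the definition of $\nu$ and {\bf h2}, $\psi'(s)<0$ on $[0,\nu)$, and by the fact recalled just after Definition~\ref{def.majorcondition} (Proposition~2.5 of \cite{Ferreira2009}), $n_{\psi}(s)=s-\psi(s)/\psi'(s)<0$ for $s\in(0,\nu)$. Dividing $e_{\psi}(s,0)$ by $|\psi'(s)|=-\psi'(s)$ then gives $e_{\psi}(s,0)/|\psi'(s)|=-n_{\psi}(s)=|n_{\psi}(s)|$. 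The case $s=0$, i.e. $x=\bar{x}$, is handled separately: since $E_f(\bar{x},\bar{x})=0$ we get $N_{f+F}(\bar{x})=\bar{x}$, while $n_{\psi}(0)=0$ by {\bf h1}. This establishes the first assertion $\|N_{f+F}(x)-\bar{x}\|\leq|n_{\psi}(\|x-\bar{x}\|)|$.

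For the inclusion $N_{f+F}(B(\bar{x},r_{\bar{x}}))\subset B(\bar{x},r_{\bar{x}})$, take any $x\in B(\bar{x},r_{\bar{x}})$; if $x\neq\bar{x}$, apply the first assertion with $t=\|x-\bar{x}\|$ and then invoke Proposition~\ref{pr:incr2}, which gives $|n_{\psi}(s)|<s$ for $s\in(0,\rho)$ (note $r_{\bar{x}}\leq r\leq\rho$), to obtain $\|N_{f+F}(x)-\bar{x}\|\leq|n_{\psi}(\|x-\bar{x}\|)|<\|x-\bar{x}\|<r_{\bar{x}}$; for $x=\bar{x}$ the conclusion is immediate. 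Finally, assuming {\bf h3} and $x\neq\bar{x}$: since $n_{\psi}(s)<0$ on $(0,\nu)$, the map $s\mapsto[\psi(s)/\psi'(s)-s]/s^{p+1}=|n_{\psi}(s)|/s^{p+1}$ is exactly the function appearing in {\bf h3}, hence strictly increasing on $(0,\nu)$; from $0<\|x-\bar{x}\|\leq t<r_{\bar{x}}\leq\nu$ it follows that $|n_{\psi}(\|x-\bar{x}\|)|\leq[\,|n_{\psi}(t)|/t^{p+1}\,]\,\|x-\bar{x}\|^{p+1}$, and combining with the first assertion gives the stated bound.

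I do not expect a genuine obstacle here: the argument is essentially the assembly of Lemma~\ref{le:wdns}, Lemma~\ref{pr:taylor}, the identity $e_{\psi}(s,0)/|\psi'(s)|=|n_{\psi}(s)|$, and the monotonicity furnished by {\bf h3} and Proposition~\ref{pr:incr2}. The only points needing care are the sign bookkeeping (that $\psi'<0$ and $n_{\psi}<0$ throughout $[0,r_{\bar{x}})$ and $(0,r_{\bar{x}})$ respectively) and isolating the degenerate case $x=\bar{x}$ before any division by $\|x-\bar{x}\|$.
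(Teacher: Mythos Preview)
Your proposal is correct and follows essentially the same approach as the paper: both chain Lemma~\ref{le:wdns} with Lemma~\ref{pr:taylor}, then reduce $e_{\psi}(s,0)/|\psi'(s)|$ to $|n_{\psi}(s)|$ via $\psi(0)=0$ and the sign information, use Proposition~\ref{pr:incr2} for the inclusion, and invoke the monotonicity in {\bf h3} for the final inequality. The only cosmetic differences are that the paper dispatches the case $x=\bar{x}$ at the outset via $0\in f(\bar{x})+F(\bar{x})$ rather than via $E_f(\bar{x},\bar{x})=0$, and writes the {\bf h3} step as a strict inequality, but these do not amount to a different route.
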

\begin{proof}
Since $0\in f(\bar{x}) + F(\bar{x})$  we have $ \bar{x}=N_{f+F}(\bar{x})$. Thus,  the inequality of the lemma is trivial for $x=\bar{x}$. Now, assume that  $0<\|x-\bar{x}\|\leq t$. Hence, Lemma~\ref{le:wdns} implies that  the mapping $ x\mapsto L_f(x,0)^{-1}\cap B(\bar{x}, r_{\bar{x}})$ is   single-valued  in   $ B(\bar{x}, r_{\bar{x}})$ and Lipschitz continuous with modulus $\lambda /|\psi'(\|x-\bar{x}\|)|$. Using   \eqref{NF} and Lemma \ref{le:wdns}, it is easy to conclude that
\begin{equation*} \label{eq:il10}
 \|\bar{x}-N_{f+F}(x)\| \leq \frac{\lambda}{|\psi'(\|x-\bar{x}\|)|} \|E_{f}(x,\bar{x})\|.
\end{equation*}
Now, applying Lemma~\ref{pr:taylor} we obtain
$$
\|\bar{x}-N_{f+F}(x)\| \leq \frac{e_{\psi} (\|x-\bar{x}\|,0)}{|\psi'(\|x-\bar{x}\|)|}.
$$
On the other hand, taking into account  that $\psi(0)=0$,   the definitions of $e_\psi$ and  $n_\psi$  imply that
$$
\frac{e_{\psi} (\|x-\bar{x}\|,0)}{|\psi'(\|x-\bar{x}\|)|} =\frac{\psi(\|x-\bar{x}\|)}{\psi'(\|x-\bar{x}\|)} -\|x-\bar{x}\|= |n_{\psi}(\|x-\bar{x}\|)|.
$$
Hence, the first statement follows by combining the above two expressions.
For proving the inclusion  of the lemma, take $x\in B(\bar{x}, r_{\bar{x}})$.  Since $\|x-\bar{x}\|< r_{\bar{x}} $, $r_{\bar{x}}\leq\rho$ and $\|N_{f+F}(x)-\bar{x}\|\leq  |n_{\psi}(\|x-\bar{x}\|)|$, thus using the  second part of Proposition~\ref{pr:incr2} we conclude that
$
\|N_{f+F}(x)-\bar{x}\|< \|x-\bar{x}\|
$
which prove the  inclusion.

In the following, we prove  last inequality. Due $0\in f(\bar x) + F(\bar x)$, the inequality is trivial for $x=\bar x$. If $0 < \|x-\bar{x}\| \leq t$ then assumption {\bf h3} and \eqref{eq:def.nf} yields
$$
\frac{|n_{\psi}(\|x-\bar{x}\|)|}{\|x-\bar{x}\|^{p+1}} < \frac{|n_{\psi}(t)|}{t^{p+1}}.
$$
Therefore, using the first part of Lemma~\ref{le:cl} the inequality  follows.
\end{proof}

In the next result we  obtain the uniqueness of the solution  in the neighborhood $B[\bar x, \sigma].$
\begin{lemma}\label{l:uniq}
There exists a constant  $\bar{\sigma}\leq\min \{r_{\bar{x}}, \sigma\}$ such that $\bar{x}$ is the unique solution of \eqref{eq:ipi}  in $B[\bar{x}, \bar{\sigma}]$.
\end{lemma}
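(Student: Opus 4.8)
The plan is to choose the radius $\bar\sigma$ so small that the closed ball $B[\bar x,\bar\sigma]$ sits inside the open ball $B(\bar x,r_{\bar x})$ of Lemma~\ref{le:wdns}, and then to observe that any solution of \eqref{eq:ipi} lying in $B(\bar x,r_{\bar x})$ is automatically a fixed point of the Newton iteration mapping $N_{f+F}$ from \eqref{NF}; the contraction-type estimate of Lemma~\ref{le:cl} together with Proposition~\ref{pr:incr2} then forces that fixed point to equal $\bar x$.

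First I would fix any $\bar\sigma$ with $0<\bar\sigma<r_{\bar x}$ and check that it meets the bound in the statement. Recall $r_{\bar x}\le r=\min\{\kappa,\rho\}$ and $\rho\le\nu$; moreover, by Proposition~2.5 of \cite{Ferreira2009} one has $t-\psi(t)/\psi'(t)<0$ on $(0,\nu)$, and since $\psi'<0$ there, this gives $\psi(t)<t\psi'(t)<0$ for every $t\in(0,\nu)$. Hence every $t\in(0,\min\{\nu,\kappa\})$ lies in the set defining $\sigma$, so $\sigma\ge\min\{\nu,\kappa\}\ge r\ge r_{\bar x}$ and therefore $\bar\sigma<r_{\bar x}=\min\{r_{\bar x},\sigma\}$, while $B[\bar x,\bar\sigma]\subset B(\bar x,r_{\bar x})$.

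Next, let $x^{*}\in B[\bar x,\bar\sigma]$ satisfy $f(x^{*})+F(x^{*})\ni 0$. Then $0\in f(x^{*})+f'(x^{*})(x^{*}-x^{*})+F(x^{*})$, i.e.\ $x^{*}\in L_f(x^{*},0)^{-1}$; since also $x^{*}\in B(\bar x,r_{\bar x})$, we get $x^{*}\in L_f(x^{*},0)^{-1}\cap B(\bar x,r_{\bar x})$. By Lemma~\ref{le:wdns} this intersection is single-valued on $B(\bar x,r_{\bar x})$, so by \eqref{NF} it equals $\{N_{f+F}(x^{*})\}$, whence $N_{f+F}(x^{*})=x^{*}$. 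Applying Lemma~\ref{le:cl} with this $x^{*}$ and $t=\|x^{*}-\bar x\|<r_{\bar x}$ gives $\|x^{*}-\bar x\|=\|N_{f+F}(x^{*})-\bar x\|\le |n_{\psi}(\|x^{*}-\bar x\|)|$. If $x^{*}\ne\bar x$, then $0<\|x^{*}-\bar x\|<r_{\bar x}\le\rho$, so Proposition~\ref{pr:incr2} yields $|n_{\psi}(\|x^{*}-\bar x\|)|<\|x^{*}-\bar x\|$, a contradiction; hence $x^{*}=\bar x$.

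I expect the only delicate point to be the bookkeeping on the radii --- making sure a single $\bar\sigma$ is at once strictly below $r_{\bar x}$ (so $N_{f+F}$ is single-valued and Lemma~\ref{le:cl} applies on the \emph{closed} ball) and no larger than $\sigma$, which as above is automatic since $\sigma\ge r_{\bar x}$. An alternative, more Robinson-flavoured route would invoke strong regularity of $f+F$ at $\bar x$ directly, bounding $\|x^{*}-\bar x\|\le\lambda\|E_f(\bar x,x^{*})\|\le\psi(\|x^{*}-\bar x\|)+\|x^{*}-\bar x\|$ via \eqref{Hyp:MH}, a direct integration, and {\bf h1}; this forces $\psi(\|x^{*}-\bar x\|)\ge 0$ and, by strict convexity of $\psi$, $\|x^{*}-\bar x\|\notin(0,\sigma)$, which is where the constant $\sigma$ genuinely appears.
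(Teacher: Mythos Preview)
Your primary argument is correct and differs from the paper's. The paper actually follows what you call the ``Robinson-flavoured route'': it invokes Corollary~\ref{cor:ban} at the base point $\bar x$, shrinks $\bar\sigma$ further so that $E_f(\bar x,y)\in B(0,r_0)$ for all $y\in B[\bar x,\bar\sigma]$, writes $y=L_f(\bar x,-E_f(\bar x,y))^{-1}\cap B(\bar x,r_{\bar x})$ for a competing solution $y$, obtains $\|\bar x-y\|\le\lambda\|E_f(\bar x,y)\|$, and then integrates the majorant condition \eqref{Hyp:MH} to get $\psi(\|y-\bar x\|)\ge 0$; the definition of $\sigma$ finishes the job. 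In that argument $\sigma$ is genuinely used, and the extra shrinking step (for $E_f(\bar x,\cdot)$) is needed.

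Your route instead recycles the Newton contraction already established in Lemma~\ref{le:cl}: any solution $x^*$ in $B(\bar x,r_{\bar x})$ is a fixed point of $N_{f+F}$, so $\|x^*-\bar x\|\le|n_\psi(\|x^*-\bar x\|)|$, and Proposition~\ref{pr:incr2} forces $x^*=\bar x$. This is shorter, avoids the additional radius shrinking, and shows that in fact \emph{any} $\bar\sigma\in(0,r_{\bar x})$ works. The observation that $\psi<0$ on $(0,\nu)$ (hence $\sigma\ge\min\{\nu,\kappa\}\ge r\ge r_{\bar x}$) is correct and explains why the bound $\bar\sigma\le\min\{r_{\bar x},\sigma\}$ is automatically met in your setting; the paper's approach, by contrast, is the one in which $\sigma$ carries actual content.
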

\begin{proof}
Let $r_{\bar{x}}>0$  the constant  given by Lemma~\ref{le:wdns}. Thus,   Corollary~\ref{cor:ban} implies that there exists $r_{0}>0$ such that, for any $x\in B(\bar{x}, r_{\bar{x}})$,   the mapping $ z\mapsto L_f(x,z)^{-1}\cap B(\bar{x}, r_{\bar{x}})$ is   single-valued  from $B(0, r_{0})$ to  $B(\bar{x}, r_{\bar{x}})$ and there holds
$$
\left\|L_f(x,u)^{-1}\cap B(\bar{x}, r_{\bar{x}})- L_f(x,v)^{-1}\cap B(\bar{x}, r_{\bar{x}}) \right\| \leq \frac{\lambda \|u-v\|}{1-\lambda\|f'(x)-f'(\bar{x})\|},
$$
for each $u, v \in  B(0, r_{0})$. Now, due to   $f$ be   continuous,  we have $$\lim_{x\to \bar{x}} E_f(\bar{x}, x) =0.$$ Thus,  we can take $\bar{\sigma}\leq\min \{r_{\bar{x}}, \sigma\}$,  such that
$$
E_f(\bar{x}, y) \in B(0,r_0).
$$
Let $y\in B(\bar{x}, \bar{\sigma})$ and  assume that  $0\in f(y)+F(y).$    Then,   some  manipulations  yield
\begin{align*}
0\in f(y) + F(y)&= f(y)-f(\bar{x}) - f'(\bar{x})(y-\bar{x})+f(\bar{x})+f'(\bar{x})(y-\bar{x})  + F(y)\\
                                       &= E_f(\bar{x},y) + L_f(\bar{x},y),
\end{align*}
which implies that  $-E_f(\bar{x},y)\in  L_f(\bar{x},y)$. Since the mapping $ z\mapsto L_f(\bar{x},z)^{-1}\cap B(\bar{x}, r_{\bar{x}})$ is   single-valued  from $B(0, r_{0})$ to  $B(\bar{x}, r_{\bar{x}})$,  we have
$$
y=L_f(\bar{x}, -E_f(\bar{x},y))^{-1}\cap B(\bar{x}, r_{\bar{x}}), \qquad \bar{x}=L_f(\bar{x}, 0)^{-1}\cap B(\bar{x}, r_{\bar{x}}).
$$
Thus,  substituting into above inequality  $x=\bar{x}$, $u=0$ and $v=-E_f(\bar{x},y)$, we conclude that
$$
\|\bar{x}-y\|=\|L_f(\bar{x}, 0)^{-1}\cap B(\bar{x}, r_{\bar{x}})-L_f(\bar{x}, -E_f(\bar{x},y))^{-1}\cap B(\bar{x}, r_{\bar{x}})\| \leq \lambda \|E_f(\bar{x},y)\|.
$$
Using definition on \eqref{eq:def.er} and \eqref{Hyp:MH} with $x=\bar{x}+u(y-\bar{x})$ and $\tau=0$, last inequality implies
\begin{eqnarray*}
\|\bar{x}-y\| &\leq& \lambda \|f(y)-f(\bar{x}) -f'(\bar{x})(y-\bar{x})\| \\
              &\leq&  \int_{0}^{1} \lambda \left\| f'(\bar{x}+u(y-\bar{x}))-f'(\bar{x})\right\|  \|y-\bar{x}\|du\\
              &\leq&   \int_{0}^{1} [\psi'(u\|y-\bar{x})\|)-\psi'(0)] \|y-\bar{x}\|du.
\end{eqnarray*}
Performing the integral of the right hand side of the above inequality we have $0\leq \psi(\|y-\bar{x}\|)$, which implies that $ \psi(\|y-\bar{x}\|)=0$ due to    $\psi (t)<0$ for $t\in (0, \sigma)$ and $\|y-\bar{x}\|\leq \sigma$.  Since  $0\leq \|y-\bar{x}\|\leq {\sigma}$ and $0$ is the unique zero of $\psi$ in $[0, \sigma],$ we conclude that $\|y-\bar{x}\|=0$, i.e., $y=\bar{x}$ and $\bar{x}$ is the unique solution of  \eqref{eq:ipi} in $B[\bar{x},\bar{\sigma}].$
\end{proof}
The next result  gives  the biggest convergence radius,  its  proof is similar  to the proof of Lemma~{2.15} of \cite{Ferreira2009}.
\begin{lemma}\label{pr:best}
If  $\psi(\rho)/(\rho \psi'(\rho))-1=1$ and $\rho < \kappa$, then  $r_{\bar x}=\rho$ is the biggest convergence radius.
\end{lemma}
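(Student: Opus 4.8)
The plan is to show that $\rho$ is the largest possible value of $r_{\bar x}$ by exhibiting, under the stated hypotheses, a point $\bar x + \delta$ arbitrarily close to $\bar x$ (in norm greater than $\rho$, or approaching $\rho$ from above) for which one iteration of Newton's method, or the limit behaviour of the iterates, fails to satisfy the conclusions of Theorem~\ref{th:nt} — in particular, fails the contraction estimate $\|N_{f+F}(x)-\bar x\| < \|x-\bar x\|$. Concretely, I would first analyze the majorant function at $\rho$. By definition $\rho = \sup\{t\in(0,\nu): \psi(t)/(t\psi'(t)) - 1 < 1\}$, and the hypothesis $\psi(\rho)/(\rho\psi'(\rho)) - 1 = 1$ together with $\rho < \kappa$ means $\rho$ is an interior point at which the quantity $|n_\psi(t)|/t = |\psi(t)/(t\psi'(t)) - 1|$ equals exactly $1$; combined with Proposition~\ref{pr:incr2} (which gives $|n_\psi(t)| < t$ on $(0,\rho)$) and {\bf h3}-type monotonicity of $[\psi(t)/\psi'(t) - t]/t$, this forces $|n_\psi(\rho)| = \rho$ and $|n_\psi(t)| > t$ for $t$ slightly larger than $\rho$ (within $(\rho,\nu)$). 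This is the analytic heart of the argument and mirrors Lemma~2.15 of \cite{Ferreira2009}.

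Next I would transfer this to the generalized equation. The idea is that the majorant inequality \eqref{Hyp:MH} is, in the worst case, an equality — i.e. one can always realize a function $f$ (with the given $f'(\bar x)$, strong regularity modulus $\lambda$, and majorant $\psi$) for which Lemma~\ref{pr:taylor} and Lemma~\ref{le:cl} hold with equality, so that $\|N_{f+F}(x) - \bar x\| = |n_\psi(\|x-\bar x\|)|$ for $x$ on a suitable ray from $\bar x$. For such an $f$, picking $x$ with $\rho < \|x - \bar x\| < \nu$ (and within the domain, which is possible since $\rho < \kappa$) gives $\|N_{f+F}(x) - \bar x\| = |n_\psi(\|x-\bar x\|)| > \|x - \bar x\|$, so the Newton iterates move away from $\bar x$ and cannot converge to it; hence no convergence radius exceeding $\rho$ can work. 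Combined with the fact — already established through Lemmas~\ref{le:wdns}--\ref{le:cl} and Corollary~\ref{cr:kanttk} — that $r_{\bar x} = \rho$ (or any $r_{\bar x} \le \rho$) \emph{does} yield convergence, this shows $\rho$ is the biggest convergence radius. I would also invoke the equality case $\psi(\rho)/(\rho\psi'(\rho)) - 1 = 1$ to upgrade "$r_{\bar x} \le \rho$" to "$r_{\bar x} = \rho$ is attained", checking that the proof of Theorem~\ref{th:nt} goes through with the closed constraint $\|x_0 - \bar x\| < \rho$ rather than $\le \rho$.

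The main obstacle I anticipate is the realizability step: showing that the bounds in Lemma~\ref{pr:taylor} and Lemma~\ref{le:cl} are tight, i.e. that there genuinely exists a pair $(f, F)$ meeting all hypotheses of Theorem~\ref{th:nt} for which the Newton step attains $|n_\psi|$ exactly at some norm just above $\rho$. In the classical $F \equiv \{0\}$ setting this is done by an explicit one-dimensional example (this is exactly Example~2 of \cite{Ferreira2011} / the discussion around Lemma~2.15 of \cite{Ferreira2009}); here one must check that adjoining a set-valued $F$ — or simply keeping $F \equiv \{0\}$, which is a legitimate special case satisfying strong regularity whenever $f'(\bar x)$ is boundedly invertible — does not destroy the construction. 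Since Remark stated after Theorem~\ref{th:nt} already records that for $F \equiv \{0\}$ the theorem reduces to Theorem~2 of \cite{Ferreira2011}, the cleanest route is: the extremal example for the $F \equiv \{0\}$ case is already an instance of \eqref{eq:ipi}, so sharpness for the general statement follows immediately from sharpness in that special case, and no new construction is needed. The remaining work is then purely the monotonicity bookkeeping on $n_\psi$ near $\rho$, which is routine given Proposition~\ref{pr:incr2} and {\bf h2}.
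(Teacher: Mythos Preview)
Your proposal is correct and follows essentially the same route the paper indicates, namely the argument of Lemma~2.15 in \cite{Ferreira2009}: deduce $|n_\psi(\rho)|=\rho$ directly from the hypothesis $\psi(\rho)/(\rho\psi'(\rho))-1=1$, and then exhibit sharpness via the one-dimensional extremal example with $F\equiv\{0\}$ (a legitimate instance of \eqref{eq:ipi}), for which the Newton iterates coincide with the majorant sequence and hence fail to converge from $\|x_0-\bar x\|=\rho$. One small cleanup: you do not need to invoke any {\bf h3}-type monotonicity, since the equality $|n_\psi(\rho)|=\rho$ already follows immediately from the hypothesis and suffices for the non-convergence of the extremal example at radius $\rho$.
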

\subsection{Proof of {\bf Theorem \ref{th:nt}}} \label{sec:proof}
First, note that the inclusion  in \eqref{eq:DNS} together  \eqref{NF} and \eqref{eq:NFef} imply that   the sequence $\{x_k\}$  satisfies
\begin{equation} \label{NFS}
x_{k+1}=N_{f+F}(x_k),\qquad k=0,1,\ldots \,,
\end{equation}
which is indeed an equivalent definition of this sequence.
\begin{proof}
All statements  involving  $\{t_k\}$ were proved in Corollary \ref{cr:kanttk}.  Since  Lemma~\ref{le:wdns} and     \eqref{NF} implies that there exist constants $r_{\bar{x}}>0$ and  $r_{0}>0$ such that $r_{\bar{x}} \leq r$ and,  for any $x\in B(\bar{x}, r_{\bar{x}})$,  the mapping $ N_{f+F}$ is single-valued in $B(\bar{x},r_{\bar{x}})$. Thus, taking into account that    $x_0\in B(\bar{x},r_{\bar{x}})$, we conclude by combining  \eqref{NFS} and   inclusion  $N_{f+F}(B(\bar{x}, r_{\bar{x}})) \subset B(\bar{x}, r_{\bar{x}})$ in  Lemma~\ref{le:cl} that   $\{x_k\}$  is well defined and remains  in $B(\bar{x}, r_{\bar{x}})$.  Now, we are going to prove that $\{x_k \}$ converges towards $\bar{x}$. Without lose generality we assume that the sequence $\{x_k \}$ is infinity. Since $0< \|x_k-\bar{x}\|<r_{\bar{x}}\leq \rho,$ for $k=0,1,\ldots,$ we obtain from \eqref{NFS}, Lemma~\ref{le:cl} and second part of Proposition~\ref{pr:incr2} that
\begin{equation}\label{eq:conv}
\|x_{k+1}-\bar{x}\| \leq |n_{\psi}(\|x_k-\bar{x}\|)|<\|x_k-\bar{x}\|, \qquad k=0,1,\ldots .
\end{equation}
Thus, $\{\|x_k-\bar{x}\|\}$ is strictly decreasing and convergent. Let $\bar{\alpha}=\lim_{k \to \infty} \|x_k-\bar{x}\|.$ Because $\{\|x_k-\bar{x}\|\}$ rest in $(0,\rho)$ and it is  strictly decreasing we have $0\leq \bar{\alpha}<\rho.$ Then, by continuity of $n_{\psi}$ and \eqref{eq:conv} imply $0\leq \bar{\alpha}=|n_{\psi}(\bar{\alpha})|,$ and from second part of Proposition~\ref{pr:incr2} we have $\bar{\alpha}=0.$ Therefore, the convergence of $\{x_k\}$ to $\bar{x}$  is proved. Now we are going show that $\bar{x}$ is a solution of the generalized equation $f(x) + F(x) \ni 0.$ From  inclusion in \eqref{eq:DNS} we conclude
$$
\left(x_{k+1}, -f(x_k)-f'(x_k)(x_{k+1}-x_k)\right) \in \mbox{gph} ~F, \qquad k=0,1,\ldots .
$$
By assumption the set-valued mapping $F$ has closed graph and   $f$  is  continuous with      $f'$ continuous, thus last inclusion   implies that
$$
\lim_{k\to \infty }\left((x_{k+1}, -f(x_k)-f'(x_k)(x_{k+1}-x_k)\right)=(\bar{x}, -f(\bar{x}))\in  \mbox{gph} ~F,
$$
 which implies $f(\bar{x}) + F(\bar{x}) \ni 0.$ Now, we are going to show the  first inequality in  \eqref{eq:linearconv}. Note that \eqref{eq:conv} implies
$$
\frac{\|x_{k+1}-\bar{x}\|}{\|x_k-\bar{x}\|} \leq \frac{|n_{\psi}(\|x_k-\bar{x}\|)|}{\|x_k-\bar{x}\|}, \qquad k=0,1,\ldots.
$$
Since $\lim_{k\to \infty} \|x_k-\bar{x}\|=0$, the desired equality follows from the first statement in Proposition~\ref{pr:incr2}.
To prove \eqref{eq:rateconvergence}, firstly we will show by induction that the sequences   $\{x_k\}$ and $\{t_k\}$ defined  in \eqref{eq:DNS} satisfy
\begin{equation}\label{eq.indu}
\|x_k-\bar x\|\leq t_k, \qquad k=0,1,\ldots.
\end{equation}
Since $t_0=\|x_0-\bar x\|$, the above inequality holds to $k=0$. Now, we assume that $\|x_k-\bar x\| \leq t_k$ holds. Using \eqref{NFS}, second part of Lemma~\ref{le:cl}, the induction assumption and \eqref{eq:tknk} we have
$$
\|x_{k+1}-\bar{x}\|=\|N_{f+F}(x_{k})-\bar{x}\|\leq \frac{|n_{\psi}(t_k)|}{t_k^{p+1}}  \|x_k-\bar{x}\|^{p+1} = \frac{t_{k+1}}{t_k^{p+1}}  \|x_k-\bar{x}\|^{p+1}\leq t_{k+1},
$$
and the proof by induction is complete. Thus, the inequality  \eqref{eq:rateconvergence} follows by combination of  \eqref{eq.indu} and second part of Lemma~\ref{le:cl}.   Finally, the uniqueness follows from  Lemma~\ref{l:uniq} and   the last statement in the theorem follows from   Lemma~\ref{pr:best}.
\end{proof}
\section{Particular cases} \label{apl}
In this section, some  special cases of  Theorem \ref{th:nt} will be considered. For instance, if $F\equiv \{0\}$  and    $f'$ satisfies a H\"older-type condition, a particular instance of Theorem~\ref{th:nt}, which retrieves  the classical convergence theorem on Newton's method under the Lipschitz condition will be obtained; see \cite{Rall1974, Traub1979}.  We also obtain  Theorem~1 of N.~H.~Josephy  in \cite{josephy1979} and, up to some minor adjustments, Theorem~1 of A.~L.~Dontchev \cite{Dontchev1996}. To complete this section, a version of Smale's theorem on Newton's method for analytical functions is proved in Theorem~\ref{theo:Smale}.
\subsection{Under H\"older-type condition}
The next result, which is a consequence of our main result Theorem~\ref{th:nt}, is a version of  classical convergence theorem for Newton's method under H\"older-type condition  for solving generalized equations of the type \eqref{eq:ipi}. Classical  versions  for  $F\equiv \{0\}$   have  appeared   in  \cite{Huang2004, Li2008a, Rall1974, Traub1979}.
\begin{theorem} \label{th:cc}
Let $\banacha$, $\banachb$ be Banach spaces, $\Omega\subseteq \banacha$ an open set and
  $f:{\Omega}\to \banachb$ be continuous with Fr\'echet derivative $f'$ continuous in $\Omega$, $F:\banacha \rightrightarrows  \banachb$ be a set-valued mapping with closed graph and $\bar{x}\in \Omega$.  Suppose that $f+F$ is strongly regular at $\bar{x}$ for $0$ with modulus $\lambda >0$ and  there exist  constants $K>0$ and $0<p\leq 1$ such that
\begin{equation}\label{eq:hc}
\lambda \left\|f'(x)-f'(\bar{x}+\tau(x-\bar{x}))\right\| \leq   (K-\tau^p)\|x-\bar{x}\|^p, \;  x\in B(\bar{x}, \kappa), \;  \tau \in [0,1].
 \end{equation}
Let $r:=\min \left\{\kappa, \,[(p+1)/((2p+1)K]^{1/p}\right\}$, where $\kappa:=\sup\{t>0: B(\bar{x}, t)\subset \Omega\}$. Then, there exists    a convergence radius  $r_{\bar{x}}>0$ with $r_{\bar{x}}\leq r$ such that  the sequences  with starting point $x_0\in B(\bar{x}, r_{\bar{x}})/\{\bar{x}\}$ and $t_0=\|\bar{x}-x_0\|$, respectively,
\begin{equation} \label{eq:Kant}
 f(x_k)+f'(x_k)(x_{k+1}-x_k)+F(x_{k+1})\ni 0, \qquad t_{k+1} = \frac{Kpt_k^{p+1}}{(p+1)[1-Kt_k^p]}, 
\end{equation}
$ k=0,1,\ldots\,,$ are well defined, $\{t_k\}$ is strictly decreasing, is contained in $(0, r)$ and converges to $0$, $\{x_k\}$ is contained in $B(\bar{x}, r_{\bar{x}})$ and  converges to the point $\bar{x}$ which is a unique solution of $f(x)+F(x)\ni 0$ in $B(\bar{x}, \bar{\sigma})$, where $\bar{\sigma}\leq\min \{r_{\bar{x}},  [(p+1)/K]^{1/p}\}$. Moreover, $\{t_{k+1}/ t_k^2\}$ is strictly decreasing, $t_{k+1}/t_k^2<1/[2/K-2\|\bar{x}-x_0\|]$  and
$$
 \|\bar{x}-x_{k+1}\| \leq \frac{Kp \|x_k-\bar{x}\|^{p+1}}{(p+1)[1-Kt_k^p]}  \leq \frac{Kp \|x_k-\bar{x}\|^{p+1}}{(p+1)[1-K\|x_0-\bar{x}\|^p]},  \quad k=0,1,\ldots\,.
$$
 If, additionally,  $[(p+1)/(2p+1)K]^{1/p}<\kappa$, then $r_{\bar x}=[(p+1)/(2p+1)K]^{1/p}$ is the biggest  convergence radius.
\end{theorem}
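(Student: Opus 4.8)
The plan is to deduce Theorem~\ref{th:cc} from Theorem~\ref{th:nt} by exhibiting the H\"older data \eqref{eq:hc} as a specific majorant function and then transcribing the conclusions. Concretely, I would take $R=\infty$ and
\[
\psi(t):=\frac{K}{p+1}\,t^{p+1}-t,\qquad t\in[0,\infty),
\]
so that $\psi'(t)=Kt^{p}-1$ and $\psi''(t)=Kp\,t^{p-1}$. Then $\psi(0)=0$, $\psi'(0)=-1$, and $\psi'$ is strictly increasing on $[0,\infty)$ because $p>0$, so h1 and h2 hold; and a one-line computation gives $\psi'(\|x-\bar x\|)-\psi'(\tau\|x-\bar x\|)=(K-\tau^{p})\|x-\bar x\|^{p}$, so the majorant inequality \eqref{Hyp:MH} is exactly the hypothesis \eqref{eq:hc}. (When $p<1$ the function $\psi$ is of class $C^{2}$ only on $(0,\infty)$; this is harmless, since $\psi''$ enters the argument only through h3 and the remark on quadratic rate, always evaluated at points $t_{0}>0$.) Hence $\psi$ is a majorant function for $f$ on $B(\bar x,R)$ with modulus $\lambda$, and it remains to match the derived constants and the recursion.

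First I would compute, for $t\in(0,\nu)$ with $\nu=K^{-1/p}$, the quantities attached to $\psi$ in Theorem~\ref{th:nt}. Writing $\psi(t)=t\big(\tfrac{K}{p+1}t^{p}-1\big)$ and $t\psi'(t)=t(Kt^{p}-1)$, one finds $\psi(t)/(t\psi'(t))-1=\tfrac{p}{p+1}\cdot\tfrac{Kt^{p}}{1-Kt^{p}}$, which is strictly less than $1$ exactly when $t^{p}<(p+1)/((2p+1)K)$; thus $\rho=\big((p+1)/((2p+1)K)\big)^{1/p}$. From $\psi(t)<0\Leftrightarrow t<((p+1)/K)^{1/p}$ one gets $\sigma=\min\{\kappa,((p+1)/K)^{1/p}\}$, so $r=\min\{\kappa,\rho\}$ is the $r$ of the statement and $\bar\sigma\le\min\{r_{\bar x},((p+1)/K)^{1/p}\}$. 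Likewise $t\psi'(t)-\psi(t)=\tfrac{Kp}{p+1}t^{p+1}$, so the Newton map for $\psi$ is $n_{\psi}(t)=-\tfrac{Kp\,t^{p+1}}{(p+1)(1-Kt^{p})}$ and $t_{k+1}=|n_{\psi}(t_{k})|$ is precisely the recursion in \eqref{eq:Kant}; finally $\big[\psi(t)/\psi'(t)-t\big]/t^{p+1}=\tfrac{Kp}{(p+1)(1-Kt^{p})}$ is strictly increasing on $(0,\nu)$, so h3 holds with the given $p$. Invoking Theorem~\ref{th:nt} then gives verbatim the well-definedness and strict monotonicity of $\{t_{k}\}$ and $\{t_{k+1}/t_{k}^{p+1}\}$, their location in $(0,r)$, the convergence $x_{k}\to\bar x$ and $t_{k}\to0$, the uniqueness of $\bar x$ in $B(\bar x,\bar\sigma)$, and estimate \eqref{eq:rateconvergence}, i.e.\ $\|\bar x-x_{k+1}\|\le(t_{k+1}/t_{k}^{p+1})\|x_{k}-\bar x\|^{p+1}=\tfrac{Kp}{(p+1)(1-Kt_{k}^{p})}\|x_{k}-\bar x\|^{p+1}$; since $t_{k}\le t_{0}=\|x_{0}-\bar x\|$, this yields the second displayed inequality of the statement.

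For the quadratic-rate refinement, when $p=1$ the derivative $\psi'(t)=Kt-1$ is affine, hence convex, so the remark following Theorem~\ref{th:nt} applies and gives $t_{k+1}/t_{k}^{2}\le\psi''(t_{0})/(2|\psi'(t_{0})|)=K/(2(1-Kt_{0}))=1/(2/K-2\|x_{0}-\bar x\|)$, while $\{t_{k+1}/t_{k}^{2}\}=\{K/(2(1-Kt_{k}))\}$ is strictly decreasing because $\{t_{k}\}$ is. Lastly, since $K\rho^{p}=(p+1)/(2p+1)$, one checks $\psi(\rho)/(\rho\psi'(\rho))-1=\tfrac{(p/(p+1))K\rho^{p}}{1-K\rho^{p}}=1$, so when $\rho<\kappa$ the final clause of Theorem~\ref{th:nt} (equivalently Lemma~\ref{pr:best}) yields that $r_{\bar x}=\rho=\big((p+1)/((2p+1)K)\big)^{1/p}$ is the biggest convergence radius. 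I do not expect any genuine obstacle here: the substance is entirely carried by Theorem~\ref{th:nt}, and what remains is the routine algebra of identifying $\psi$; the only point that needs a little care is the bookkeeping matching each constant ($\nu$, $\rho$, $\sigma$, $r$, $\bar\sigma$) and the recursion for $t_{k}$ against the statement, together with the minor reading that the $C^{2}$-smoothness required of $\psi$ in Definition~\ref{def.majorcondition} should be understood on $(0,R)$ when $p<1$.
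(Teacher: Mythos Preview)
Your approach is exactly the paper's: take $\psi(t)=\tfrac{K}{p+1}t^{p+1}-t$, verify \eqref{Hyp:MH} and {\bf h1}--{\bf h3}, compute $\nu,\rho,\sigma,r$ and the Newton recursion for $\{t_k\}$, and invoke Theorem~\ref{th:nt}; you even supply more of the algebra than the paper does. One slip to fix: your ``one-line computation'' actually yields $\psi'(\|x-\bar x\|)-\psi'(\tau\|x-\bar x\|)=K(1-\tau^{p})\|x-\bar x\|^{p}$, not $(K-\tau^{p})\|x-\bar x\|^{p}$; this mismatch reflects what appears to be a typo in the stated hypothesis \eqref{eq:hc}, and with the intended right-hand side $K(1-\tau^{p})\|x-\bar x\|^{p}$ both your argument and the paper's go through verbatim.
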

\begin{proof}
Using condition in \eqref{eq:hc},  we can immediately prove that  $f$, $\bar{x}$ and $\psi:[0, \kappa)\to \mathbb{R}$, defined by
$
\psi(t)=Kt^{p+1}/(p+1)-t,
$
satisfy the inequality \eqref{Hyp:MH} and the conditions  {\bf h1},   {\bf h2} and   {\bf h3}  in Theorem~\ref{th:nt}. In this case, it is easy to see that  $\rho$ and $\nu$, as defined in Theorem \ref{th:nt}, satisfy
$
\rho=[(p+1)/(2p+1)K]^{1/p} \leq \nu=[1/K]^{1/p}
$
and, as a consequence,  $r:=\min \{\kappa,\; [(p+1)/((2p+1)K]^{1/p}\}$. Moreover,   $\psi(\rho)/(\rho \psi'(\rho))-1=1$, $\psi(0)=\psi([(p+1)/K]^{1/p})=0$ and $\psi(t)<0$ for all $t\in (0,\, [(p+1)/K]^{1/p})$. Also,  the sequence $\{t_k\}$  in  Theorem \ref{th:nt} is given by  \eqref{eq:Kant}  and satisfies
$$
t_{k+1}/t_k^2= \frac{Kp}{(p+1)[1-Kt_k^p]}< \frac{Kp}{(p+1)[1-K\|x_0-\bar{x}\|^p]},  \qquad k=0,1,\ldots.
$$
Therefore, the result follows  by invoking Theorem~\ref{th:nt}.
\end{proof}
\begin{remark}
Theorem~\ref{th:cc}  contain,  as particular instance, some results on Newton's method; as we can see in, Rall \cite{Rall1974} and   Traub and  Wozniakowski \cite{Traub1979}.
\end{remark}
We are going to  study the variational inequality problem, namely, the generalized equation associated to $F=N_C$  the normal cone  of  $C$ a nonempty,  closed and  convex subset of $\banachb$,
\begin{equation} \label{eq:ncge}
f(x)+N_{C}(x) \ni 0.
\end{equation}
The next result is a version of  classical convergence theorem for Newton's method under Lipschitz-type condition for the variational inequality  \eqref{eq:ncge}, it has been  prove by  N.~H.~Josephy  in  \cite{josephy1979}.
\begin{theorem} \label{th:jtnm}
Let $\banacha$, $\banachb$ be Banach spaces, $C$ a nonempty,  closed and  convex subset of $\banachb$,  $\Omega\subseteq \banacha$ an open set and  $f:{\Omega}\to \banachb$ be continuous with Fr\'echet derivative $f'$ continuous in $\Omega$  such that
$$
 \|f'(x)-f'(y)\| \leq L \|x-y\|,\qquad x,\, y\in \Omega,
$$
where $L>0$. Let   $\bar{x}\in \Omega$ and   suppose that $f+N_{C}$ is strongly regular at $\bar{x}$ for $0$ with modulus $\lambda >0.$   Let $r=\min \left\{\kappa, \,2/(3\lambda L) \right\}$, where  $\kappa=\sup\{t\in [0, R): B(\bar{x}, t)\subset \Omega\}$.  Then, there exists    a convergence radius $r_{\bar{x}}>0$ with $r_{\bar{x}}\leq r$ such that  the sequences  with starting point $x_0\in B(\bar{x}, r_{\bar{x}})/\{\bar{x}\}$ and $t_0=\|\bar{x}-x_0\|$, respectively,
$$
 f(x_k)+f'(x_k)(x_{k+1}-x_k)+N_C(x_{k+1})\ni 0, \qquad t_{k+1} =\left((\lambda L/2) \, t_k^2\right)/(1-\lambda Lt_k),
$$
$ k=0,1,\ldots\,,$  are well defined, $\{t_k\}$ is strictly decreasing, is contained in $(0, r)$ and converges to $0$, $\{x_k\}$ is contained in $B(\bar{x}, r_{\bar{x}})$ and  converges to the point $\bar{x}$ which is a unique solution of $f(x)+N_C(x)\ni 0$ in $B(\bar{x}, \bar{\sigma})$, where $0<\bar{\sigma}\leq \min \{r_{\bar{x}},  2/K\}$. Moreover, $\{t_{k+1}/ t_k^2\}$ is strictly decreasing, $t_{k+1}/t_k^2<1/[2/(\lambda L)-2\|\bar{x}-x_0\|]$  and
$$
 \|\bar{x}-x_{k+1}\| \leq \frac{\lambda L}{2} \, \frac{1}{1-\lambda Lt_k}\,\|x_k-\bar{x}\|^2  \leq \frac{\lambda L}{2}\, \frac{1}{1-\lambda L\|x_0-\bar{x}\|}\,\|x_k-\bar{x}\|^2,
$$
$k=0,1, \ldots .$ If, additionally,  $2/(3\lambda L)<\kappa$, then $r_{\bar x}=2/(3\lambda L)$ is the biggest  convergence radius.
\end{theorem}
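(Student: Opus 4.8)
The plan is to obtain Theorem~\ref{th:jtnm} as a direct specialization of Theorem~\ref{th:nt}, taking $F=N_{C}$ and the quadratic majorant function $\psi:[0,\kappa)\to\mathbb{R}$ defined by $\psi(t)=(\lambda L/2)t^{2}-t$. First I would check the structural hypotheses: since $C$ is nonempty, closed and convex, $N_{C}$ is a set-valued mapping with nonempty closed graph, so the requirements on $F$ are met, while strong regularity of $f+N_{C}$ at $\bar{x}$ for $0$ with modulus $\lambda$ is assumed outright. Then I would verify that $\psi$ is a majorant function for $f$ on $B(\bar{x},\kappa)$ with modulus $\lambda$: the global Lipschitz bound $\|f'(x)-f'(y)\|\le L\|x-y\|$ gives, for $x\in B(\bar{x},\kappa)$ and $\tau\in[0,1]$,
\[
\lambda\|f'(x)-f'(\bar{x}+\tau(x-\bar{x}))\|\le \lambda L(1-\tau)\|x-\bar{x}\|=\psi'(\|x-\bar{x}\|)-\psi'(\tau\|x-\bar{x}\|),
\]
so \eqref{Hyp:MH} holds (in fact with equality), and $\psi(0)=0$, $\psi'(0)=-1$, together with $\psi'(t)=\lambda L t-1$ strictly increasing, give {\bf h1} and {\bf h2}.

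Next I would compute the scalar quantities attached to $\psi$ and match them to the statement. From $\psi'(t)=\lambda L t-1$ we get $\nu=1/(\lambda L)$; a short manipulation shows that for $t\in(0,\nu)$ the inequality $\psi(t)/(t\psi'(t))-1<1$ is equivalent to $t<2/(3\lambda L)$, hence $\rho=2/(3\lambda L)$ and $r=\min\{\kappa,\,2/(3\lambda L)\}$, as claimed. The Newton map for $\psi$ simplifies to $n_{\psi}(t)=(\lambda L/2)t^{2}/(\lambda L t-1)$, whence the scalar sequence of \eqref{eq:DNS} becomes $t_{k+1}=|n_{\psi}(t_{k})|=((\lambda L/2)t_{k}^{2})/(1-\lambda L t_{k})$, exactly the recursion in Theorem~\ref{th:jtnm}. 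Moreover $[\psi(t)/\psi'(t)-t]/t^{2}=(\lambda L/2)/(1-\lambda L t)$ is strictly increasing on $(0,\nu)$, so {\bf h3} holds with $p=1$ and \eqref{eq:rateconvergence} applies with $p=1$. Finally $\psi$ has zeros $0$ and $2/(\lambda L)$ and is negative in between, hence $\sigma=\min\{\kappa,\,2/(\lambda L)\}$, and Lemma~\ref{l:uniq} yields uniqueness of $\bar{x}$ in $B[\bar{x},\bar{\sigma}]$ with $\bar{\sigma}\le\min\{r_{\bar{x}},\,2/(\lambda L)\}$.

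It then remains to read off the explicit estimates. Since $\{t_{k}\}$ is strictly decreasing with $t_{0}=\|\bar{x}-x_{0}\|$, the ratio $t_{k+1}/t_{k}^{2}=(\lambda L/2)/(1-\lambda L t_{k})$ is strictly decreasing, and bounding $t_{k}\le t_{0}$ gives $t_{k+1}/t_{k}^{2}<(\lambda L/2)/(1-\lambda L\|\bar{x}-x_{0}\|)=1/[2/(\lambda L)-2\|\bar{x}-x_{0}\|]$; combining this with \eqref{eq:rateconvergence} (for $p=1$) produces the two chained bounds for $\|\bar{x}-x_{k+1}\|$. For optimality, I would verify that at $t=\rho=2/(3\lambda L)$ one has $\psi(\rho)/(\rho\psi'(\rho))-1=1$, so that when additionally $2/(3\lambda L)<\kappa$ the last assertion of Theorem~\ref{th:nt} (equivalently Lemma~\ref{pr:best}) forces $r_{\bar{x}}=2/(3\lambda L)$ to be the biggest convergence radius. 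There is no genuine obstacle here: the whole argument is a verification that the quadratic $\psi$ satisfies Definition~\ref{def.majorcondition} together with {\bf h3} and that its associated constants collapse to the closed forms above; the only step demanding a little care is the algebraic identification of $\rho$ and the boundary identity underlying the optimal-radius claim.
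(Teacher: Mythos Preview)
Your proposal is correct and is essentially the paper's own argument: the paper derives Theorem~\ref{th:jtnm} by invoking Theorem~\ref{th:cc} with $p=1$, $K=\lambda L$ and $F=N_{C}$, and Theorem~\ref{th:cc} in turn is just Theorem~\ref{th:nt} applied with the majorant $\psi(t)=Kt^{p+1}/(p+1)-t$, which for $p=1$, $K=\lambda L$ is precisely your $\psi(t)=(\lambda L/2)t^{2}-t$. You simply bypass the intermediate H\"older statement and specialize Theorem~\ref{th:nt} directly, but the majorant, the computed constants $\nu$, $\rho$, $\sigma$, the verification of {\bf h1}--{\bf h3}, and the boundary identity $\psi(\rho)/(\rho\psi'(\rho))-1=1$ are identical to those in the paper's proof of Theorem~\ref{th:cc}.
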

\begin{proof}
The result  follows by applying Theorem~\ref{th:cc} with  $\tau=0$, $p=1$,  $K=\lambda L$ and $F=N_{C}$.
\end{proof}
A. L.   Dontchev \cite{Dontchev1996}  under Aubin continuity  of the mapping  $L_f(\bar{x}, \cdot )^{-1}:\banachb \rightrightarrows  \banacha$, defined by
\begin{equation} \label{eq:invplmnc}
L_f(\bar{x}, z )^{-1}:=\left\{y\in X ~:~ z\in f(\bar{x})+f'(\bar{x})(y-\bar{x})+N_C(y)\right\},
\end{equation}
 has shown that  the Newton's method for solving \eqref{eq:ncge} generates a sequence that  converges
$Q$-quadratically to a solution. Now,  our purpose  is to show that, if $\banacha= \banachb=\mathbb{R}^{n}$, $F=N_C$ and $C \subset \mathbb{R}^n$ is a nonempty, polyhedral and convex set, then  this particular instance of Theorem~1 of \cite{Dontchev1996} follows from Theorem~\ref{th:cc}. We begin with the formal definition of Aubin continuity;  for more details see  \cite{DontchevRockafellar96, DontchevRockafellar2009}. First we need the following definitions:  The {\it distance} from a point $v\in \mathbb{R}^{n}$ to a set $U\subset \mathbb{R}^{n}$  is $d(v,  U):=\inf \{\|v-u\|~: u\in U\}$ and  the {\it excess} from   the set $U$ to the set  $V$ is $e(V, U):=\sup\{ d(v,  U) ~:~v\in V\}$.
\begin{definition}
A mapping  $H: \mathbb{R}^m \rightrightarrows \mathbb{R}^n$  is said to be  Aubin continuous, at $\bar{y}\in \mathbb{R}^m$ for $\bar{x}\in \mathbb{R}^n$ with modulus $\alpha\geq 0$, if $\bar{x}\in H(\bar{y})$ and there exist  constants $a>0$ and $b>0$ such that
$$
e(H(y_1)\cap  B(\bar{x}, a),  H(y_2))\leq \alpha \|y_1-y_2\|, \qquad \forall ~ y_1, y_2 \in B(\bar{y}, b).
$$
\end{definition}
It has been shown in  \cite[Theorem 1]{DontchevRockafellar96}   that  if  $C\subset \mathbb{R}^n$ is  a polyhedral convex set, then   Aubin continuity of ${\cal{L}}_f(\bar{x}, \cdot )^{-1}$  is equivalent to   strong regularity of $f+N_{C}$.
Next we state, with  some  adjustment,  Theorem~1 of   \cite{Dontchev1996}; see also \cite{Dontchev1996_2}.
\begin{theorem}
Let  $C\subset \mathbb{R}^n$ be a polyhedral convex set,  $\Omega\subseteq \mathbb{R}^n$ an open set and  $f:{\Omega}\to \banachb$ be continuous with Fr\'echet derivative  $f'$ continuous in $\Omega$   such that
$$
 \|f'(x)-f'(y)\| \leq L \|x-y\|,\qquad  \forall ~ x,\, y\in \Omega,
$$
where $L>0$. Let   $\bar{x}\in \Omega$ and   suppose that   ${\cal{L}}_f(\bar{x}, \cdot )^{-1}: \mathbb{R}^m \rightrightarrows  \mathbb{R}^n$ defined in \eqref{eq:invplmnc}  is Aubin continuous  at $0\in \mathbb{R}^m$ for $\bar{x}\in \mathbb{R}^n$ with modulus $\alpha\geq 0$.  Let $r:=\min \left\{\kappa, \,2/(3\lambda L) \right\}$, where  $\kappa=\sup\{t\in [0, R): B(\bar{x}, t)\subset \Omega\}$.  Then, there exists    a convergence radius $r_{\bar{x}}>0$ with $r_{\bar{x}}\leq r$ such that  the sequences  with starting point $x_0\in B(\bar{x}, r_{\bar{x}})/\{\bar{x}\}$ and $t_0=\|\bar{x}-x_0\|$, respectively,
$$
 f(x_k)+f'(x_k)(x_{k+1}-x_k)+N_C(x_{k+1})\ni 0, \qquad t_{k+1} =\left((\lambda L/2) \, t_k^2\right)/(1-\lambda Lt_k),
$$
$ k=0,1,\ldots\,,$ are well defined, $\{t_k\}$ is strictly decreasing, is contained in $(0, r)$ and converges to $0$, $\{x_k\}$ is contained in $B(\bar{x}, r_{\bar{x}})$ and  converges to the point $\bar{x}$ which is a unique solution of $f(x)+N_C(x)\ni 0$  in $B(\bar{x}, \bar{\sigma})$, where $0<\bar{\sigma}\leq\min \{r_{\bar{x}},  2/K\}$.  Moreover, $\{t_{k+1}/ t_k^2\}$ is strictly decreasing, $t_{k+1}/t_k^2<1/[2/(\lambda L)-2\|\bar{x}-x_0\|]$  and
$$
 \|\bar{x}-x_{k+1}\| \leq \frac{\lambda L}{2} \, \frac{1}{1-\lambda Lt_k}\,\|x_k-\bar{x}\|^2  \leq \frac{\lambda L}{2}\, \frac{1}{1-\lambda L\|x_0-\bar{x}\|}\,\|x_k-\bar{x}\|^2,
$$
$ k=0,1, \ldots .$ If, additionally,  $2/(3\lambda L)<\kappa$, then $r_{\bar x}=2/(3\lambda L)$ is the biggest  convergence radius.
\end{theorem}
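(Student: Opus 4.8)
The plan is to deduce this statement directly from Theorem~\ref{th:jtnm} (equivalently, from Theorem~\ref{th:cc} with $p=1$, $K=\lambda L$ and $F=N_C$) by trading the strong regularity hypothesis for the Aubin continuity hypothesis. The only genuinely new ingredient is the fact that, for a \emph{polyhedral} convex set $C$, Aubin continuity of the inverse linearization $\mathcal{L}_f(\bar x,\cdot)^{-1}$ and strong regularity of $f+N_C$ carry exactly the same information; everything else is a specialization of results already proved.

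Concretely, I would proceed as follows. First, since $C\subset\mathbb{R}^n$ is polyhedral convex and $f$ is continuously differentiable near $\bar x$, I invoke \cite[Theorem~1]{DontchevRockafellar96}, which asserts that the Aubin continuity of $\mathcal{L}_f(\bar x,\cdot)^{-1}$ at $0$ for $\bar x$ is equivalent to the strong regularity of $f+N_C$ at $\bar x$ for $0$ in the sense of Definition~\ref{eq:stronmetr}, with the two moduli comparable; in particular the Aubin modulus $\alpha\ge 0$ yields a modulus of strong regularity $\lambda>0$. Thus the hypothesis produces a $\lambda>0$ for which $f+N_C$ is strongly regular at $\bar x$ for $0$. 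With this $\lambda$ fixed, the global bound $\|f'(x)-f'(y)\|\le L\|x-y\|$ is exactly the hypothesis of Theorem~\ref{th:jtnm}, so I simply apply that theorem: well-definedness of $\{x_k\}$ and $\{t_k\}$, strict monotone decrease of $\{t_k\}$ to $0$ inside $(0,r)$ with $r=\min\{\kappa,\,2/(3\lambda L)\}$, containment of $\{x_k\}$ in $B(\bar x,r_{\bar x})$ and its convergence to $\bar x$, uniqueness of $\bar x$ as a solution of $f(x)+N_C(x)\ni 0$ in $B(\bar x,\bar\sigma)$ with $0<\bar\sigma\le\min\{r_{\bar x},2/K\}$, strict decrease of $\{t_{k+1}/t_k^2\}$ together with $t_{k+1}/t_k^2<1/[2/(\lambda L)-2\|\bar x-x_0\|]$, the quadratic estimates for $\|\bar x-x_{k+1}\|$, and optimality $r_{\bar x}=2/(3\lambda L)$ when $2/(3\lambda L)<\kappa$ all transfer verbatim, since the constant $r$ is the same in both statements. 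Alternatively one can go straight through Theorem~\ref{th:cc}: the Lipschitz bound gives $\lambda\|f'(x)-f'(\bar x+\tau(x-\bar x))\|\le\lambda L(1-\tau)\|x-\bar x\|$, i.e. condition \eqref{eq:hc} with $p=1$ and $K=\lambda L$, and the conclusion is that theorem applied with $F=N_C$.

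The main obstacle is entirely concentrated in the first step: quoting the Aubin-continuity $\Longleftrightarrow$ strong-regularity equivalence in precisely the quantitative form required, namely at the matching base points ($0$ for $\bar x$) and with an explicit control of a strong regularity modulus $\lambda$ by the Aubin modulus $\alpha$, so that the constants $2/(3\lambda L)$, $2/K$, $1/[2/(\lambda L)-2\|\bar x-x_0\|]$ appearing in the conclusion are meaningful. This is exactly what \cite[Theorem~1]{DontchevRockafellar96} provides for polyhedral $C$ (it is the polyhedrality of $C$ that makes the equivalence hold, not merely the weaker ``closed and convex'' assumption used elsewhere); once it is in place, the rest of the proof is a routine specialization of Theorem~\ref{th:jtnm} and requires no new estimates.
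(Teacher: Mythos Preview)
Your proposal is correct and matches the paper's own proof essentially verbatim: the paper also invokes \cite[Theorem~1]{DontchevRockafellar96} to pass from Aubin continuity of $\mathcal{L}_f(\bar x,\cdot)^{-1}$ to strong regularity of $f+N_C$ (using polyhedrality of $C$), and then applies Theorem~\ref{th:jtnm}. Your additional remarks on the quantitative relation between the moduli $\alpha$ and $\lambda$ are a useful clarification of a point the paper leaves implicit.
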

\begin{proof}
Since   $C\subset \mathbb{R}^n$  is a polyhedral convex set, \cite[Theorem 1]{DontchevRockafellar96}  implies that   Aubin continuity  of ${\cal{L}}_f(\bar{x}, \cdot )^{-1}$ at $0\in \mathbb{R}^m$ for $\bar{x}\in \mathbb{R}^n$ with modulus $\alpha\geq 0$,  is equivalent to   strong regularity of $f+N_{C}$ at $\bar{x}$ for $0$ with modulus $\alpha\geq 0$. Thus, the result  follows by  applying   Theorem~\ref{th:jtnm}.
\end{proof}
\subsection{Under Smale's-type condition}
In this section, we assume that $f$ is an analytic function and using the ideas listed in \cite{Alvarez2008}, we present a version of the classical convergence theorem for Newton's method for solving the generalized equation \eqref{eq:ipi}. The classical version  has  appeared   in   corollary of Proposition 3 pp.~195 of Smale \cite{Smale1986}, see also Proposition 1 pp.~157 and Remark 1 pp.~158 of  Blum, Cucker,  Shub, and Smale~\cite{BlumSmale1998}; see also \cite{Ferreira2009}. For stating  the  result we need of  the following definition.

 Let  $\Omega\subseteq \banacha$ and   $f:{\Omega}\to \banachb$ be an analytic function. The $n$-th derivative of $f$  at $x$ is a $n$-th multilinear map $f^n(x): \banacha \times \ldots \times \banacha \to \banacha$ and its norm is defined by
  \[
 \|f^n(x)\|=\sup\left\{\|f^n(x)(v_1, \dots, v_n) \| : v_1, \dots, v_n\in \banacha, ~ \,\|v_i\|\leq1, \, i=1, \ldots, n \right\}.
 \]
\begin{theorem} \label{theo:Smale}
Let $\banacha$, $\banachb$ be Banach spaces, $\Omega\subseteq \banacha$ an open set and
  $f:{\Omega}\to \banachb$ be an analytic function, $F:\banacha \rightrightarrows  \banachb$ be a set-valued mapping with closed graph and $\bar{x}\in \Omega$.  Suppose that $0\in f(\bar{x})+F(\bar{x})$ and $f+F$ is strongly regular at $\bar{x}$ for $0$ with modulus $\lambda >0.$  Suppose that
\begin{equation} \label{eq:SmaleCond}
   \gamma:= \sup _{ n > 1 }\left\| \frac  {\lambda f^{(n)}(\bar{x})}{n!}\right\|^{1/(n-1)}<+\infty.
\end{equation}
Let $r:=\min \{\kappa,\; (5-\sqrt{17})/(4\gamma)\}$ the convergence radius, where $$\kappa:=\sup\{t>0 ~:~ B(\bar{x}, t)\subset \Omega\}.$$  Then, there exists  a convergence radius  $r_{\bar{x}}>0$ with $r_{\bar{x}} \leq r$ such that  the sequences  with starting point $x_0\in B(\bar{x}, r_{\bar{x}})/\{\bar{x}\}$ and $t_0=\|\bar{x}-x_0\|$, respectively
$$
0\in f(x_k)+f'(x_k)(x_{k+1}-x_k)+F(x_{k+1}), \qquad  t_{k+1}=\gamma t_k^2/[2(1- \gamma t_k)^2-1],
$$
$ k=0,1,\ldots\,,$ are well defined, $\{t_k\}$ is strictly decreasing, contained in $(0, r)$ and converges to $0$, and $\{x_k\}$ is contained in $B(\bar{x},r_{\bar{x}})$ and  converges to the point $\bar{x}$ which is the unique solution of $f(x)+F(x)\ni 0$ in $B(\bar{x}, \bar{\sigma})$, where $0<\bar{\sigma}\leq\min \{r_{\bar{x}}, 1/(2\gamma)\}$.  Moreover, $\{t_{k+1}/ t_k^2\}$ is strictly decreasing, $t_{k+1}/t_k^2<\gamma/[2(1-\gamma \|x_0-\bar{x}\|)^2-1]$,   for  $  k=0,1,\ldots $ and
$$
    \|x_{k+1}-\bar{x}\| \leq  \frac{ \gamma}{2(1- \gamma t_k)^2-1}\;\|x_k-\bar{x}\|^2\leq \frac{ \gamma}{2(1- \gamma \|x_0-\bar{x}\|)^2-1}\;\|x_k-\bar{x}\|^2, 
$$
$k=0,1, \ldots.$ If, additionally,  $(5-\sqrt{17})/(4 \gamma)<\kappa$, then $r_{\bar x}=(5-\sqrt{17})/(4\gamma)$ is the biggest  convergence radius.
\end{theorem}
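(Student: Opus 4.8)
The plan is to obtain Theorem~\ref{theo:Smale} as a specialization of Theorem~\ref{th:nt}, the task being to produce a majorant function adapted to Smale's condition \eqref{eq:SmaleCond} and then to compute the associated constants. Following the usual choice for the $\gamma$-theory, I would set
\[
\psi:[0,1/\gamma)\to\mathbb{R},\qquad \psi(t):=\frac{t}{1-\gamma t}-2t ,
\]
so that $\psi'(t)=(1-\gamma t)^{-2}-2$, $\psi''(t)=2\gamma(1-\gamma t)^{-3}$ and $\psi'''(t)=6\gamma^2(1-\gamma t)^{-4}>0$. Then $\psi(0)=0$ and $\psi'(0)=-1$, which is {\bf h1}; $\psi'$ is strictly increasing, which is {\bf h2}; and $\psi'$ is convex, so by the Remark following Theorem~\ref{th:nt} condition {\bf h3} holds with $p=1$, which is precisely what produces the quadratic rate. (In Definition~\ref{def.majorcondition} one takes $R=1/\gamma$, hence $\kappa\le 1/\gamma$; since $(5-\sqrt{17})/(4\gamma)<1/\gamma$ this choice does not affect the value of $r$.)

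The main work, and the step I expect to be the most technical, is verifying the majorant inequality \eqref{Hyp:MH}. Since $f$ is analytic and \eqref{eq:SmaleCond} gives $\|\lambda f^{(n)}(\bar x)/n!\|\le\gamma^{n-1}$ for all $n\ge 2$, the Taylor expansion of $f$ about $\bar x$ converges on $B(\bar x,1/\gamma)$; differentiating it twice and estimating termwise yields, for $\|u\|<1/\gamma$,
\[
\lambda\|f''(\bar x+u)\|\le\sum_{n\ge 2}\frac{\lambda\|f^{(n)}(\bar x)\|}{(n-2)!}\,\|u\|^{n-2}\le\sum_{n\ge 2}n(n-1)\gamma^{n-1}\|u\|^{n-2}=\frac{2\gamma}{(1-\gamma\|u\|)^{3}}=\psi''(\|u\|).
\]
Then, for $x\in B(\bar x,\kappa)$ and $\tau\in[0,1]$, writing $f'(x)-f'(\bar x+\tau(x-\bar x))=\int_\tau^1 f''(\bar x+s(x-\bar x))(x-\bar x)\,ds$, taking norms and using the previous bound gives $\lambda\|f'(x)-f'(\bar x+\tau(x-\bar x))\|\le\int_\tau^1\psi''(s\|x-\bar x\|)\|x-\bar x\|\,ds=\psi'(\|x-\bar x\|)-\psi'(\tau\|x-\bar x\|)$, which is exactly \eqref{Hyp:MH}. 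Thus $\psi$ is a majorant function for $f$ on $B(\bar x,1/\gamma)$ with modulus $\lambda$.

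It remains to identify the constants of Theorem~\ref{th:nt} for this $\psi$. From $\psi'(t)<0\Leftrightarrow(1-\gamma t)^2>1/2$ one gets $\nu=(2-\sqrt2)/(2\gamma)$. A short computation (substituting $v=1-\gamma t$) gives $n_\psi(t)=t-\psi(t)/\psi'(t)=\gamma t^2/(1-2(1-\gamma t)^2)$, so that $|n_\psi(t)|=\gamma t^2/(2(1-\gamma t)^2-1)$ on $(0,\nu)$, which is exactly the recurrence defining $\{t_k\}$ in the statement. For $\rho$, since $1-\gamma t>0$ and $1-2(1-\gamma t)^2<0$ on $(0,\nu)$, the inequality $\psi(t)/(t\psi'(t))-1<1$ simplifies to $2(\gamma t)^2-5(\gamma t)+1>0$, whence $\rho=(5-\sqrt{17})/(4\gamma)<\nu$, and $\psi(\rho)/(\rho\psi'(\rho))-1=1$ because the quadratic vanishes at $\gamma\rho$. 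Finally $\psi(t)<0\Leftrightarrow t(2\gamma t-1)<0\Leftrightarrow t<1/(2\gamma)$, so $\sigma=\min\{\kappa,1/(2\gamma)\}$. In particular $r=\min\{\kappa,\rho\}=\min\{\kappa,(5-\sqrt{17})/(4\gamma)\}$, as asserted.

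With these facts in place, an application of Theorem~\ref{th:nt} with $p=1$ yields all conclusions: the well-definedness of $\{x_k\}$ and $\{t_k\}$, the strict decrease and convergence to $0$ of $\{t_k\}$ and of $\{t_{k+1}/t_k^2\}$, the inclusion $\{x_k\}\subset B(\bar x,r_{\bar x})$ and its convergence to $\bar x$, and uniqueness of the solution in $B(\bar x,\bar\sigma)$ with $\bar\sigma\le\min\{r_{\bar x},1/(2\gamma)\}$. The estimate \eqref{eq:rateconvergence} specializes to $\|x_{k+1}-\bar x\|\le(t_{k+1}/t_k^2)\|x_k-\bar x\|^2=\gamma\|x_k-\bar x\|^2/(2(1-\gamma t_k)^2-1)$; since $\{t_k\}$ is decreasing and $t\mapsto 2(1-\gamma t)^2-1$ is decreasing on $(0,\nu)$, one may replace $t_k$ by $t_0=\|x_0-\bar x\|$ in the denominator to obtain the displayed bounds (the same replacement gives the stated bound for $t_{k+1}/t_k^2$). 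Lastly, the optimality claim follows from the final statement of Theorem~\ref{th:nt}, since $\psi(\rho)/(\rho\psi'(\rho))-1=1$ and, by hypothesis, $\rho<\kappa$.
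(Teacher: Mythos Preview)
Your proposal is correct and follows essentially the same route as the paper: choose the majorant $\psi(t)=t/(1-\gamma t)-2t$, verify {\bf h1}--{\bf h3}, establish \eqref{Hyp:MH}, compute $\nu$, $\rho$, $\sigma$ and the Newton map $|n_\psi|$, and then invoke Theorem~\ref{th:nt}. The only difference is cosmetic: where the paper cites Lemmas~\ref{lem.cond1} and~\ref{lc} to pass from the bound $\lambda\|f''(\bar x+u)\|\le\psi''(\|u\|)$ to \eqref{Hyp:MH}, you carry out both of those arguments inline (Taylor-expand $f''$ and integrate $f''$ along the segment), which is exactly what those lemmas do.
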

To proving Theorem~\ref{theo:Smale} we will need of the following results. The first one, gives us a condition  that is easier to check than condition
\eqref{Hyp:MH}, whenever  the functions under consideration  are twice continuously differentiable, and its proof is similar to Lemma~5.3 of  \cite{Alvarez2008}. The second one, gives a relationship  between  the second derivatives $f''$ and $ \psi''$,  which allow us to show that  $f$ and $\psi$ satisfy \eqref{Hyp:MH}, and its proof follows the same path of Lemma~22 of \cite{FGO11}.
\begin{lemma}\label{lem.cond1}
If $f: \Omega \subset \banacha \to \banachb$ is an analytic function, $\bar{x} \in \Omega$ and $B(\bar{x}, 1/ \gamma)\subset \Omega,$ where $\gamma$ is defined in \eqref{eq:SmaleCond}, then for all $x\in B(\bar{x}, 1/  \gamma),$ it holds that
$
\|f''(x)\|\leq 2  \gamma/(1-  \gamma\|x-\bar{x}\|)^3.
$
\end{lemma}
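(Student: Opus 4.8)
The plan is to use analyticity to expand $f$, and hence $f''$, in a power series about $\bar x$, to estimate that series term by term via Smale's condition \eqref{eq:SmaleCond}, and to sum the resulting numerical series by recognizing it as a twice-differentiated geometric series. First I would record that $\gamma<+\infty$ already controls every derivative of $f$ at $\bar x$: \eqref{eq:SmaleCond} gives $\lambda\|f^{(n)}(\bar x)\|\le n!\,\gamma^{\,n-1}$ for all $n>1$, whence $\limsup_{n}\big(\|f^{(n)}(\bar x)\|/n!\big)^{1/n}\le\gamma$, so the Taylor series of $f$ centred at $\bar x$ has radius of convergence at least $1/\gamma$. Since $f$ is analytic on $\Omega$ and $B(\bar x,1/\gamma)\subset\Omega$, standard Banach-space analytic-function theory (as used in the proof of Lemma~5.3 of \cite{Alvarez2008}) yields, for every $x\in B(\bar x,1/\gamma)$,
\[
f(x)=\sum_{n=0}^{\infty}\frac{1}{n!}\,f^{(n)}(\bar x)(x-\bar x)^{n},
\]
the series converging in $\banachb$; here $f^{(n)}(\bar x)(x-\bar x)^{n}$ denotes the $n$-multilinear map $f^{(n)}(\bar x)$ evaluated at $(x-\bar x,\dots,x-\bar x)$.

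Next I would differentiate this expansion twice with respect to $x$. Term-by-term differentiation is legitimate on $B(\bar x,1/\gamma)$, since differentiating a (Banach-space-valued) power series does not decrease its radius of convergence and the differentiated series represents the derivative inside the ball of convergence. This gives, for $x\in B(\bar x,1/\gamma)$,
\[
f''(x)=\sum_{n=2}^{\infty}\frac{1}{(n-2)!}\,f^{(n)}(\bar x)(x-\bar x)^{n-2},
\]
where $f^{(n)}(\bar x)$ is now evaluated on $n-2$ copies of $x-\bar x$ together with the two free arguments of the bilinear map $f''(x)$; the series converges locally uniformly in operator norm on $B(\bar x,1/\gamma)$.

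Finally I would take operator norms, pass the norm inside the sum (justified by the absolute convergence just noted), and insert $\lambda\|f^{(n)}(\bar x)\|\le n!\,\gamma^{\,n-1}$. Writing $u:=\|x-\bar x\|<1/\gamma$, this leads to
\[
\lambda\|f''(x)\|\le\sum_{n=2}^{\infty}\frac{n!}{(n-2)!}\,\gamma^{\,n-1}u^{\,n-2}=\gamma\sum_{k=0}^{\infty}(k+1)(k+2)(\gamma u)^{k},
\]
and the elementary identity $\sum_{k\ge 0}(k+1)(k+2)t^{k}=2/(1-t)^{3}$ for $|t|<1$ (obtained by differentiating the geometric series twice), applied with $t=\gamma u\in[0,1)$, delivers $\lambda\|f''(x)\|\le 2\gamma/(1-\gamma\|x-\bar x\|)^{3}$, which is the assertion of the lemma (the normalizing factor $\lambda$ being the one carried over from \eqref{eq:SmaleCond}). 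I expect the only step needing genuine care to be the first: ensuring that the Banach-space-valued Taylor series of $f$ actually represents $f$, and may be differentiated term by term, on the \emph{whole} ball $B(\bar x,1/\gamma)$ rather than on a mere neighbourhood of $\bar x$; once the root-test estimate pins the radius of convergence at $\ge 1/\gamma$, this follows from the identity principle for analytic maps between Banach spaces, and everything after that is the routine computation above.
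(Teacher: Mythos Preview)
Your argument is correct and is precisely the standard route: expand $f$ in its Taylor series at $\bar x$, differentiate twice term by term, bound each coefficient via $\lambda\|f^{(n)}(\bar x)\|\le n!\,\gamma^{n-1}$, and sum the resulting series $\gamma\sum_{k\ge 0}(k+1)(k+2)(\gamma u)^k=2\gamma/(1-\gamma u)^3$. The paper does not supply its own proof of this lemma but simply refers to Lemma~5.3 of \cite{Alvarez2008}, whose proof is exactly this computation; so your approach coincides with the intended one. Your observation that the bound actually obtained carries the factor $\lambda$, i.e.\ $\lambda\|f''(x)\|\le 2\gamma/(1-\gamma\|x-\bar x\|)^3$, is also correct and is in fact what is used downstream in combination with Lemma~\ref{lc}.
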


\begin{lemma} \label{lc}
Let $\banacha$, $\banachb$ be Banach spaces, $\Omega\subseteq \banacha$ be an open set,
  $f:{\Omega}\to \banachb$  be twice continuously differentiable. Let $\bar{x}\in \Omega$, $R>0$  and  $\kappa=\sup\{t\in [0, R): B(\bar{x}, t)\subset \Omega\}$. Let $\lambda >0$ and  \mbox{$\psi:[0,R)\to \mathbb {R}$} be twice continuously differentiable such that $ \lambda \|f''(x)\|\leqslant \psi''(\|x-\bar{x}\|),$
for all $x\in B(\bar{x}, \kappa)$, then $f$ and $\psi$ satisfy \eqref{Hyp:MH}.
\end{lemma}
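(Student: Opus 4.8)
The plan is to reduce \eqref{Hyp:MH} to a one-dimensional statement by integrating the second derivative of $f$ along the segment joining $\bar{x}+\tau(x-\bar{x})$ to $x$, and then to dominate the integrand using the hypothesis $\lambda\|f''(\cdot)\|\le\psi''(\|\cdot-\bar{x}\|)$; an elementary change of variables will convert the resulting bound into the right-hand side of \eqref{Hyp:MH}.

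First I would fix $x\in B(\bar{x},\kappa)$ and $\tau\in[0,1]$ and note that, since $\|\bar{x}+s(x-\bar{x})-\bar{x}\|=s\|x-\bar{x}\|<\kappa$ for every $s\in[0,1]$, the whole segment $\{\bar{x}+s(x-\bar{x}):s\in[0,1]\}$ lies in $B(\bar{x},\kappa)\subset\Omega$, so $f'$ and $f''$ are defined there. Introduce the curve $\phi:[0,1]\to{\mathscr L}(\banacha,\banachb)$, $\phi(s):=f'(\bar{x}+s(x-\bar{x}))$, which is continuously differentiable with $\phi'(s)=f''(\bar{x}+s(x-\bar{x}))(x-\bar{x})$ because $f\in C^{2}$. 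The fundamental theorem of calculus for continuously differentiable Banach-space--valued maps then gives
\[
f'(x)-f'(\bar{x}+\tau(x-\bar{x}))=\phi(1)-\phi(\tau)=\int_{\tau}^{1}f''(\bar{x}+s(x-\bar{x}))(x-\bar{x})\,ds .
\]

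Next I would take norms, bring $\lambda$ inside the integral, and apply the hypothesis at the point $\bar{x}+s(x-\bar{x})$, where $\|\bar{x}+s(x-\bar{x})-\bar{x}\|=s\|x-\bar{x}\|$; this yields
\[
\lambda\left\|f'(x)-f'(\bar{x}+\tau(x-\bar{x}))\right\|\le\int_{\tau}^{1}\psi''\!\left(s\|x-\bar{x}\|\right)\|x-\bar{x}\|\,ds .
\]
Writing $t:=\|x-\bar{x}\|$ and substituting $u=st$ (so $du=t\,ds$) turns the right-hand side into $\int_{\tau t}^{t}\psi''(u)\,du=\psi'(t)-\psi'(\tau t)=\psi'(\|x-\bar{x}\|)-\psi'(\tau\|x-\bar{x}\|)$, which is exactly \eqref{Hyp:MH}. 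Since $x$ and $\tau$ were arbitrary, the proof is complete.

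There is no serious obstacle in this argument: it is a segment-wise reduction to a scalar inequality, following the same path as Lemma~22 of \cite{FGO11}. The only two points deserving explicit verification are that the segment stays inside $B(\bar{x},\kappa)$ (so that the pointwise bound on $\|f''\|$ may be invoked) and the validity of the vector-valued fundamental theorem of calculus, both of which are immediate from $f\in C^{2}$ and the definition of $\kappa$.
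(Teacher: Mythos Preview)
Your proof is correct and follows precisely the route the paper indicates (the paper omits the argument and refers to Lemma~22 of \cite{FGO11}, which is exactly the integrate-and-dominate method you carried out). The only trivial caveat is the degenerate case $x=\bar{x}$, where both sides of \eqref{Hyp:MH} vanish; this is implicitly covered since your integral then equals $0$.
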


\noindent
{\bf [Proof of Theorem \ref{theo:Smale}]}.
Let $\psi:[0, 1/ \gamma) \to \mathbb{R}$ be defined by $\psi(t)=t/(1- \gamma t)-2t$. It is easy to see that $\psi$ is  analytic and
$\psi(0)=0$,   $\psi'(t)=1/(1- \gamma t)^2-2$, $\psi'(0)=-1$, $\psi''(t)=2 \gamma/(1-\gamma t)^3$.
Thence,  $\psi$ satisfies {\bf h1}  and  {\bf h2}.  Now, we combine  Lemma~\ref{lc}  with  Lemma~\ref{lem.cond1}, to conclude  that $f$  and $\psi$ satisfy  \eqref{Hyp:MH}.  The constants,  $\nu$,  $\rho$ and $r$, as defined in Theorem \ref{th:nt}, satisfy
$$\rho= \frac{5-\sqrt{17}}{4 \gamma}< \nu=\frac{\sqrt{2}-1}{\sqrt{2} \gamma}< \frac{1}{\ \gamma},\qquad r=\min \left\{\kappa,\; \frac{5-\sqrt{17}}{4 \gamma}\right\}.
$$
Moreover, $\psi(\rho)/(\rho \psi'(\rho))-1=1$ and $\psi(0)=\psi(1/(2 \gamma))=0$ and $\psi(t)<0$ for $t\in (0,\, 1/(2\gamma))$. Also, $\{t_k\}$  satisfy
$$
t_{k+1}/t_k^2=\frac{ \gamma}{2(1- \gamma t_k)^2-1}<\frac{ \gamma}{2(1- \gamma \|x_0-\bar{x}\|)^2-1} ,  \qquad k=0,1,\ldots.
$$
Therefore, the result follows by applying the Theorem~\ref{th:nt}.
\qed
\section{Final remarks } \label{rf}
In this paper, under a general majorant condition, we present a new local  convergence analysis of the  Newton's method for solving the generalized equation \eqref{eq:ipi}. Our  approach is based in the  Banach Perturbation Lemma obtained by  S.~M.~Robinson in  \cite[Theorem~2.4]{Robinson1980} and used by Josephy in his Ph.D thesis \cite{josephy1979}. The majorant condition  allow to unify several  convergence results pertaining to  Newton's method. Besides,  following the same idea of this paper, as future works,  we propose to study the  inexact Newton's method for solving the problem \eqref{eq:ipi}
described by
$$
(f(x_k) +f'(x_k)(x_{k+1}-x_k)+F(x_{k+1}))\cap R_{k}(x_k, x_{k+1}) \neq \varnothing, \qquad \;\;k=0,1, \ldots , 
$$ 
where  $R_{k}: \banacha \times \banacha \rightrightarrows \banachb$ is a sequence of set-valued mappings with closed graphs, in order to support computational implementations of the method.  Furthermore, it would be interesting   to study the approach of  this paper  under a weak assumption than strong regularity, namely,  the regularity metric; see \cite{DontchevRockafellar2009}.


%
%

\end{document}